\theoremstyle{plain}
\newtheorem{theorem}{Theorem}[section]
\newtheorem{lemma}[theorem]{Lemma}
\newtheorem{proposition}[theorem]{Proposition}
\newtheorem{corollary}[theorem]{Corollary}
\newtheorem{conjecture}[theorem]{Conjecture}
\theoremstyle{remark}
\newtheorem{remark}[theorem]{Remark}
\newcommand{\Q}{\mathbb{Q}}
\DeclareMathOperator{\Id}{Id}
\DeclareMathOperator{\length}{\mu}
\begin{document} 

\title[Free algebras]{Free algebras in division rings with an involution}
\author{Vitor O. Ferreira}
\address{Department of Mathematics, University of S\~{a}o Paulo, S\~{a}o Paulo, 05508-090, Brazil}
\email{vofer@ime.usp.br}
\thanks{The first author was partially supported by Fapesp-Brazil, 
Proj.~Tem\'atico 2009/52665-0.}
\author{\'Erica Z. Fornaroli}
\address{Department of Mathematics, Universidade Estadual de Maring\'{a}, Paran\'{a}, 87020-900, Brazil}
\email{ezancanella@uem.br}
\author{Jairo Z. Gon\c{c}alves}
\address{Department of Mathematics, University of S\~{a}o Paulo, S\~{a}o Paulo, 05508-090, Brazil}
\email{jz.goncalves@usp.br}
\thanks{The third author was supported by Grant CNPq 300.128/2008-8 and by Fapesp-Brazil, Proj.~Tem\'atico 2009/52665-0}
\keywords{Free associative algebras, field of fractions of group algebras, involutions, symmetric elements}

\begin{abstract}
Some general criteria to produce explicit free algebras inside the division
ring of fractions of skew polynomial rings are presented. These criteria are applied
to some special cases of division rings with natural involutions, yielding, for instance, 
free subalgebras generated by symmetric elements both in the division ring of fractions of
the group algebra of a torsion free nilpotent group and in the division ring of
fractions of the first Weyl algebra. 
\end{abstract}

\maketitle

\section{Introduction}

It has been conjectured by Makar-Limanov in \cite{ML84C} that a division ring which is
infinite dimensional over its center $k$ and finitely generated (as a division algebra
over $k$) must contain a noncommutative free $k$-subalgebra. Makar-Limanov himself provided
evidence for this in \cite{ML83}, where it is proved that the division ring of fractions
of the first Weyl algebra over the rational numbers contains a free subalgebra of rank $2$,
and in \cite{ML84}, where the case of the division ring of fractions of a group algebra of
a torsion free nonabelian nilpotent groups is tackled. Various authors have
dealt with this problem and Makar-Limanov's conjecture has been verified
in many families of division rings (see, e.g., \cite{mL86, MM91, GS96, FGS96, SG98, aL99, SG99,
BR12, GS12, GT12, FGS13, BR14, jS14, FG15, FGS15, BG16}).

Division rings often come equipped with an involution. That is the case, for
instance, of division rings of fractions of group algebras which are Ore
domains. These have natural involutions induced by involutions on the group.

After the work in \cite{GS12}, it has become apparent that an involutional
version of Makar-Limanov's conjecture should be investigated. To be more precise,
given a field $k$ and a division $k$-algebra $D$, a $k$-linear map $\ast\colon D\to D$
satisfying $(ab)^{\ast}=b^{\ast}a^{\ast}$ and $a^{\ast\ast}=a$ for all
$a,b\in D$ is called a \emph{$k$-involution}. An element $a\in D$ is said to
be \emph{symmetric} with respect to the involution $\ast$ if $a^{\ast}=a$.
Our aim in this paper is to contribute with supporting evidence to the
following conjecture.

\begin{conjecture}\label{conj}
Let $D$ be a division ring with center $k$, and let $\ast$ be
a $k$-involution on $D$. If $D$ is infinite dimensional over $k$ and
finitely generated as a division $k$-algebra, then there exist
two symmetric elements in $D$ which freely generate a free $k$-subalgebra
of $D$.
\end{conjecture}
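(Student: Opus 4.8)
The plan is to separate a purely algebraic \emph{symmetrization} step, which holds in complete generality, from a \emph{source of freeness} step, where the whole difficulty concentrates.

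First I would prove the following reduction: if $a\in D$ is such that $a$ and $a^{\ast}$ freely generate a free $k$-subalgebra of $D$, then the two symmetric elements
\[
s:=a+a^{\ast},\qquad t:=aa^{\ast}
\]
freely generate a free $k$-subalgebra of $D$. Symmetry is immediate, since $s^{\ast}=a^{\ast}+a=s$ and $t^{\ast}=(aa^{\ast})^{\ast}=(a^{\ast})^{\ast}a^{\ast}=aa^{\ast}=t$. For freeness it suffices to work inside the free algebra $k\langle X,Y\rangle$ under the isomorphism $X\mapsto a$, $Y\mapsto a^{\ast}$ onto the subalgebra they generate, and to show that the images $X+Y$ and $XY$ are themselves free. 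Fixing the degree-lexicographic order with $X>Y$, the leading monomials of $X+Y$ and $XY$ are $X$ and $XY$. For any nonzero noncommutative polynomial $p(U,V)$, each monomial in $U,V$ maps, under $U\mapsto X+Y$, $V\mapsto XY$, to its image monomial under $U\mapsto X$, $V\mapsto XY$ plus strictly smaller terms; since the two-word set $\{X,XY\}$ is a uniquely decodable code over $\{X,Y\}$ (every codeword begins with $X$, and the letter $Y$ occurs only immediately after an $X$, so decoding left to right is deterministic), distinct monomials in $U,V$ give distinct image monomials. Hence the degree-lexicographically largest image monomial survives with its original nonzero coefficient and cannot be cancelled, so $p(X+Y,XY)\neq0$. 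Thus $X+Y,XY$ are free, and therefore so are $s,t$. This reduces the conjecture to producing a single $a\in D$ for which $a$ and $a^{\ast}$ are free.

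To produce such an $a$, I would use that $D$, being infinite dimensional and finitely generated over $k$, contains elements transcendental over $k$, and try to organize them into a $\ast$-stable skew Laurent substructure: a subfield $K\subseteq D$ with $K/k$ transcendental, normalized by an element $x\in D$ so that $D$ contains $K(x;\sigma)$ for the induced automorphism $\sigma$ of $K$, with $\ast$ restricting to this substructure and, after replacing $x$, with $\sigma$ of infinite order. On such a substructure $a^{\ast}$ is governed by $x^{-1}$ while $a$ is governed by $x$, and one exhibits an explicit $a$ built from $x$ and a transcendental element of $K$ for which every nonzero word in $a$ and $a^{\ast}$ has nonzero leading coefficient in the $x$-adic filtration; the infinite order of $\sigma$ prevents the coefficients from collapsing, yielding freeness of $a,a^{\ast}$.

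The main obstacle is precisely this structural step. The symmetrization lemma and the freeness verification are finite leading-term computations that go through whenever the $\ast$-compatible skew Laurent substructure is available; the difficulty, and the reason the statement is stated as a conjecture, is to guarantee such a substructure inside an \emph{arbitrary} finitely generated, infinite dimensional division $k$-algebra equipped with a $k$-involution. This is a $\ast$-refined form of the still-open Makar-Limanov conjecture. I would attack it through valuation theory: construct a $\ast$-compatible valuation on $D$ whose value group is noncyclic or whose residue field is transcendental over $k$, so that the associated graded ring supplies the normalizing element $x$ and the transcendental coefficients needed to run the argument above, with $\ast$ descending to the graded object in a way that makes the pair $a,a^{\ast}$ detectable by leading terms. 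Establishing the existence of such a valuation in full generality is the crux; the present paper handles it in significant families through its general criteria for division rings of fractions of skew polynomial rings.
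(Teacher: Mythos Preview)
The statement is a conjecture, and the paper does not prove it; it only verifies the conjecture in the special cases of Theorems~\ref{th:nilpgp} and~\ref{th:weyl}. You acknowledge this yourself, correctly identifying the structural step as an involutional refinement of the still-open Makar-Limanov conjecture. So there is no proof in the paper to compare your proposal against, and your proposal is not a proof either---it is a strategy whose hard step you explicitly leave open.

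That said, your symmetrization lemma is correct and parallels what the paper does in its special cases, though the paper never uses the pair $a+a^{\ast},\,aa^{\ast}$. Once it has a free pair $A,B$ with $A^{\ast}=B$ (sometimes after twisting by a suitable automorphism, as in cases~\ref{i1} and~\ref{i4} of the proof of Theorem~\ref{th:nilpgp}), the paper passes to the symmetric pair $AB,\,BA$; in the Weyl case it instead arranges a free pair $\alpha,\beta$ with $\alpha$ skew-symmetric and $\alpha\beta$ symmetric, and takes $\alpha^{2},\,\alpha\beta$. All of these, including yours, are instances of the same easy principle: a uniquely decodable two-element code over a free pair is again free. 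Your leading-term argument for $\{X+Y,XY\}$ is fine; the paper's choices $\{XY,YX\}$ and $\{X^{2},XY\}$ admit even simpler combinatorial decodings.

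Where your proposal genuinely diverges from the paper is in the source of freeness. The paper makes no attempt to locate a $\ast$-stable skew Laurent substructure or a $\ast$-compatible valuation inside an abstract $D$. It works exclusively in division rings already presented as $D(X;\sigma,\delta)$, applies the explicit criteria of Theorems~\ref{th:1} and~\ref{th:2} to hand-picked elements, and handles the involution case by case using the classification of involutions on $\Gamma$ from~\cite{FG15} and an explicit rational-function computation for the Weyl algebra. Your valuation-theoretic program is more ambitious and a genuinely different line of attack; nothing in the paper bears on whether it can be made to work in general.
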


In \cite{FGS13}, this conjecture has been proved to hold for the division
ring of fractions, inside the division ring of Malcev-Neumann series,
of the group algebra of a nonabelian orderable group $G$ with respect to 
an involution induced by the canonical (inverting) involution
on $G$.

Here, we present proofs to the following two further special cases of
Conjecture~\ref{conj}, which can be regarded as involutional versions of 
Makar-Limanov's early results.

\begin{theorem}\label{th:nilpgp}
Let $D$ be the division ring of fractions of the group 
algebra $k\Gamma$ of the Heisenberg 
group $\Gamma$ over the field $k$ and let $\ast$ be a $k$-involution of 
$D$ which is induced from an involution on $\Gamma$. Then $D$ contains a free 
$k$-algebra of rank $2$ freely generated by symmetric elements.
\end{theorem}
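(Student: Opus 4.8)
\emph{Strategy.} I would bring $\ast$ into a normal form using automorphisms of $\Gamma$, rewrite $D$ as the division ring of fractions of a skew Laurent polynomial ring over a $\ast$-invariant rational subfield, and then produce the free symmetric pair by feeding a generic coefficient into the general criteria of this paper. To set up the normal forms, write $\Gamma=\langle x,y\rangle$ with $z=[x,y]$ generating $\Gamma'=Z(\Gamma)\cong\mathbb{Z}$, so $\Gamma/\Gamma'\cong\mathbb{Z}^{2}$. Every involution of $\Gamma$ has the form $g\mapsto\alpha(g^{-1})$ for an automorphism $\alpha$ with $\alpha^{2}=\mathrm{id}$; such an $\alpha$ induces $\bar\alpha\in GL_{2}(\mathbb{Z})$ on $\Gamma/\Gamma'$ and acts on $Z(\Gamma)$ by $z\mapsto z^{\det\bar\alpha}$, whence $z^{\ast}=z^{-\det\bar\alpha}$. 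Conjugating $\ast$ by an automorphism of $\Gamma$ — which induces an automorphism of $D$ and preserves the property of being freely generated by symmetric elements — replaces $\bar\alpha$ by a $GL_{2}(\mathbb{Z})$-conjugate and shifts the central part of $\alpha$; since the elements of order $\le2$ in $GL_{2}(\mathbb{Z})$ form only the conjugacy classes of $I$, $-I$, a diagonal reflection, and a coordinate swap, a short computation normalizing the central twists leaves finitely many normal forms. Representatives are: the canonical involution ($x^{\ast}=x^{-1}$, $y^{\ast}=y^{-1}$, $z^{\ast}=z^{-1}$); the near-central ones ($x^{\ast}=xz^{-a}$, $y^{\ast}=yz^{-b}$, $z^{\ast}=z^{-1}$, with $a,b\in\{0,1\}$ up to swapping $x,y$); the reflection ones ($x^{\ast}=x^{-1}z^{-a}$, $y^{\ast}=y$, $z^{\ast}=z$, $a\in\{0,1\}$); and the swap ($x^{\ast}=y^{-1}$, $y^{\ast}=x^{-1}$, $z^{\ast}=z$).

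\emph{A $\ast$-compatible presentation.} For each normal form I would exhibit a subfield $K\subseteq D$, purely transcendental of degree $2$ over $k$ and $\ast$-invariant, together with $t\in D$ satisfying $tKt^{-1}=K$ and $D=K(t;\phi)$, where $\phi(c)=tct^{-1}$ has infinite order. For the first three families one takes $K=k(x,z)$ and $t=y$, so $\phi(x)=xz$, $\phi(z)=z$; the swap involution does not fix $k(x,z)$, so there one takes instead $K=k(xy^{-1},z)$ and $t=x$, for which $\phi(xy^{-1})=z^{-1}xy^{-1}$, $\phi(z)=z$ and $\ast$ restricts to the identity on $K$. In every case $\ast$ restricts to an automorphism of $K$ of order $\le2$, one has $t^{\ast}=ct$ for a central-monomial cocycle $c\in K$, and — as must hold since $\ast$ comes from a group involution — $(\ast|_{K})\circ\phi\circ(\ast|_{K})=\phi^{-1}$, so that $\phi\circ(\ast|_{K})$ is again of order $\le2$.

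\emph{The free symmetric pair.} Take as first generator $t$ itself when $t^{\ast}=t$, and $t+t^{\ast}$ (a Laurent element over $K$ supported in degrees $\{-1,0,1\}$, always symmetric) otherwise. Take as second generator $p\,t$, resp.\ $p(t+t^{\ast})p$, for a coefficient $p\in K$ that is forced to be symmetric — that is, $p$ in the fixed field of $\phi\circ(\ast|_{K})$, resp.\ in $K^{\ast}$ — so that the generator itself is symmetric. Then apply the general criteria of this paper to this pair: their hypothesis amounts to a genericity condition on $p$, typically that $p$ avoid $\bigcup_{n\ge1}\mathrm{Fix}(\phi^{n})=k(z)$ together with finitely many further non-degeneracy conditions. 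This can be arranged because the relevant fixed field has transcendence degree $2$ over $k$ and hence is not contained in $k(z)$. For instance, in the canonical case one may use the pair $x+x^{-1}$, $y+y^{-1}$, and in the reflection case with $a=0$ the pair $y$, $(x+x^{-1}z^{-1})y$.

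\emph{Main obstacle.} The crux is the tension in the previous step between symmetry and genericity: making a candidate generator symmetric forces its coefficient into the fixed set of a prescribed order-$\le2$ (possibly $z$-twisted) automorphism, and one must check that this set still contains elements generic enough for the freeness criterion — this is most delicate when $z^{\ast}=z^{-1}$, where the symmetry constraint pushes the coefficient into a twisted coset rather than a genuine fixed field. Secondary points are finding the alternative skew Laurent presentation for the swap involution, and verifying the criterion's hypotheses directly inside the Ore division ring $D$ for the canonical involution, which is covered only inside a Malcev–Neumann ring by \cite{FGS13}. Beyond that, the proof is the routine bookkeeping of running the presentation and pair construction through each normal form.
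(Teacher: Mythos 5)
Your overall frame---reduce the involution to a few normal forms coming from $\Gamma$, present $D$ as the division ring of fractions of a skew polynomial ring over a rational function field, and invoke the freeness criteria of Section~2---matches the paper, which identifies $D$ with $\bigl(k(t)(Y)\bigr)(X;\sigma)$, $\sigma(Y)=tY$, and quotes the four-case classification of \cite[Theorem~3.4]{FG15} (your normal forms correspond, up to relabelling, to cases \ref{i1}--\ref{i4}). But the step you yourself flag as the ``main obstacle''---producing generators that are \emph{individually} symmetric while remaining generic enough for the freeness criterion---is exactly the step you have not carried out, and it is not how the paper proceeds. The paper never forces the two inputs of the criterion to be symmetric. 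Instead, for each normal form it produces a free pair $A=\alpha X(1-X)^{-1}$, $B=X(1-X)^{-1}\alpha$ via Theorem~\ref{th:1} and Lemma~\ref{le:rational} (applied to an $\alpha\in k(t)(Y)$ with a single nonzero pole), arranged---after twisting by an auxiliary automorphism $\psi$ sending $X$, $Y$ to the symmetric elements $X+X^{\ast}$, $Y+Y^{\ast}$ where necessary---so that $A^{\ast}=B$. Then $AB$ and $BA$ are automatically symmetric, and they freely generate a free subalgebra because $uv$ and $vu$ do so inside the free algebra $k\langle u,v\rangle$. This sidesteps the symmetry-versus-genericity tension entirely; there is no twisted coset or constrained fixed field to analyse.

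As written, your construction also cannot be fed directly into the criteria of this paper: Theorems~\ref{th:1} and \ref{th:2} produce pairs of the form $\{\alpha\Xi,\Xi\alpha\}$ or $\{\alpha_1\Xi,\dots,\alpha_n\Xi\}$ with $\Xi=(b_0+b_1X)(a_0+a_1X)^{-1}$, not pairs of the form $\{t+t^{\ast},\,p(t+t^{\ast})p\}$ where $t+t^{\ast}=X+X^{-1}$ is a Laurent element supported in degrees $\{-1,0,1\}$; and the actual hypotheses are the conditions that $\{1,\alpha,\alpha^2\}$ be left independent over $\sigma(E)$ and that $\psi(D)\cap\bigl(\sigma(E)+\sigma(E)\alpha+\sigma(E)\alpha^2\bigr)=\{0\}$, which Lemma~\ref{le:rational} verifies for specific $\alpha$---not a mere ``avoid $\bigcup_n\mathrm{Fix}(\phi^n)$'' condition. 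Your fallback for the canonical case, the pair $x+x^{-1}$, $y+y^{-1}$, is essentially the theorem of \cite{FGS13}, which the paper simply cites for that one case rather than reproving. In short, the proposal is missing the one idea ($A^{\ast}=B$ implies $AB$, $BA$ are symmetric and still free) that makes the proof go through, and the route you propose in its place is left unresolved at its acknowledged crux.
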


By the Heisenberg group, one understands the free nilpotent group of class
$2$ generated by $2$ elements. It can be presented by
$$\Gamma = \langle{x,y : [[x,y],x]=[[x,y],y]=1}\rangle,$$
where $[g,h]$ denotes the commutator $g^{-1}h^{-1}gh$ of elements $g,h$
in a group.

\begin{theorem}\label{th:weyl}
Let $A_1=\Q\langle{s,t : st-ts=1}\rangle$
denote the first Weyl algebra over the field $\Q$ of rational numbers and
let $\ast$ denote the $\Q$-involution of $A_1$ such that $s^{\ast}=-s$ and
$t^{\ast}=t$. Then the division ring of fractions $D_1$ of $A_1$ contains a
free $\Q$-subalgebra of rank $2$ freely generated by symmetric elements with
respect to the extension of $\ast$ to $D_1$.
\end{theorem}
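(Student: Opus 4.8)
The plan is to realize $D_1$ as the division ring of fractions of a skew polynomial ring of automorphism type over a commutative field, so that the general criteria become available, and then to confront the one genuinely new difficulty --- producing free generators that are $\ast$-symmetric. For the reduction, put $q=ts+\tfrac12\in A_1$; one checks that $sq=(q+1)s$, so the subring $R$ of $A_1$ generated by $q$ and $s$ is a copy of $\Q[q][x;\sigma]$ with $x=s$ and $\sigma\colon q\mapsto q+1$ (injectivity of the structure map into $A_1$ follows from a PBW/leading-term count). Since $t=(q-\tfrac12)s^{-1}$, the division ring of fractions of $R$ is all of $D_1$, hence $D_1\cong\operatorname{Frac}\bigl(K[x;\sigma]\bigr)$ with $K=\Q(q)$, $q$ transcendental over $\Q$, and $\sigma$ the shift --- an automorphism of infinite order with $K^{\langle\sigma\rangle}=\Q$, which is also the center of $D_1$. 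The involution $\ast$ of $A_1$ maps $R$ onto itself (indeed $(ts)^\ast=-ts-1\in R$ and $s^\ast=-s\in R$), hence extends, uniquely as always for an anti-automorphism of an Ore domain, to an involution of $D_1$ --- the one in the statement --- and under the identification $q^\ast=-q$, $x^\ast=-x$, so that on $K$ it is the $\Q$-automorphism $q\mapsto-q$, which conjugates $\sigma$ into $\sigma^{-1}$. This is exactly the situation governed by the skew polynomial criteria, carrying the extra bookkeeping data $q^\ast=-q$, $x^\ast=-x$.

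Next I would exhibit two symmetric elements and prove their freeness inside the skew Laurent series ring $K((x^{-1};\sigma^{-1}))$, which contains $D_1$ and carries the $x^{-1}$-adic valuation $v$. A natural family of candidates consists of units $\alpha_i=1+f_ix^{-n_i}$ with $f_i\in K$; from $(f_ix^{-n_i})^\ast=(-1)^{n_i}\sigma^{-n_i}(f_i^\ast)x^{-n_i}$ one sees that $\alpha_i$ is symmetric precisely when $f_i^\ast=(-1)^{n_i}\sigma^{n_i}(f_i)$, a constraint with non-constant solutions as soon as $n_i\ge 2$ (for instance $f_i=(q-1)^2+d_i$ when $n_i=2$). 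An alternative that I expect to be more economical is to fix one simple symmetric unit $\alpha$ together with a Cayley-type unitary $u=(1+cx)(1-cx)^{-1}$ ($c\in\Q$; this is unitary because $x^\ast=-x$, so $u^\ast u=1$), and set $\alpha_2=u\alpha u^{-1}$, which is then automatically symmetric; freeness of the pair becomes the assertion that $\alpha$ and its conjugate under the inner automorphism $\operatorname{conj}(u)$ generate a free algebra. In either case, given a nonzero element $P$ of the free algebra on two letters, one reads off the $v$-leading term of $P(\alpha_1,\alpha_2)$: its coefficient is a finite $\Q$-linear combination of products of $\sigma$-translates of $f_1,f_2$ indexed by the words occurring in $P$, and one must show this coefficient is a nonzero element of $K$. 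This non-cancellation, proved uniformly in $P$, is exactly what the general criterion delivers, and it goes through because $\sigma$ is the shift on $\Q(q)$ and $\Q$ contains no roots of unity other than $\pm1$: the pole patterns of the relevant products are rigid enough that distinct words cannot conspire to cancel.

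The crux --- and the only place where new work is required beyond Makar-Limanov's non-involutional argument --- is this non-cancellation carried out subject to the symmetry constraint. In the absence of the involution one may take maximally generic coefficient data, but over $\Q$ the conditions $f^\ast=\pm\sigma^{n}(f)$ confine the coefficients to a thin family; already for $n=1$ the only symmetric elements $1+fx^{-1}$ with $f$ polynomial reduce to elements of $A_1$ itself, which cannot generate a free subalgebra of rank $2$ (a Gelfand--Kirillov dimension count, since $\operatorname{GKdim}A_1=2$). So the orbit-product independence has to be checked by hand for a concrete, comparatively special symmetric pair rather than quoted from a soft genericity statement; I expect the conjugation device above to be the right packaging, reducing the whole matter to the single inner automorphism $\operatorname{conj}(u)$ together with a transcendence computation for the shift. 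The remaining points --- that the chosen $\alpha_i$ indeed lie in $D_1$ and are $\ast$-fixed, and that $\operatorname{Frac}(K[x;\sigma])$ really is $D_1$ with center $\Q$ --- are routine.
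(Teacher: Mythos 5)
Your reduction of $D_1$ to an automorphism-type skew polynomial ring (via $q=ts+\tfrac12$, $\sigma\colon q\mapsto q+1$, $q^{\ast}=-q$, $x^{\ast}=-x$) is correct and is a legitimate alternative to the realization used in the paper, which instead identifies $D_1$ with $\Q(t)(X;\delta)$, $\delta$ the usual derivation, $X=s$. The unitary-conjugation observation is also sound: $u=(1+cx)(1-cx)^{-1}$ does satisfy $u^{\ast}=u^{-1}$, so $u\alpha u^{-1}$ is symmetric whenever $\alpha$ is. But the proof stops exactly where the theorem begins. You never exhibit a concrete symmetric pair, and you never verify, for any candidate, the non-cancellation that freeness requires; the passages ``I expect the conjugation device above to be the right packaging'' and ``the pole patterns \dots are rigid enough that distinct words cannot conspire to cancel'' are the entire content of the crucial step. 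For units of the form $1+f_ix^{-n_i}$ the leading coefficient of every word is $1$, so all words of a given total degree contribute to the same $x^{-N}$ coefficient and the asserted rigidity is precisely the hard combinatorial claim, not a consequence of $\sigma$ being the shift. Moreover, the general criteria of Section~2 produce free generators of the shape $\alpha_i\Xi$ with $\Xi=gf^{-1}$, not units $1+f_ix^{-n_i}$, so you cannot simply ``quote the general criterion'' for your candidates; you would have to redo the linear-independence-of-the-$V_I$ argument from scratch for your family. As it stands the argument is a program, not a proof.

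You have also made the problem harder than it needs to be by insisting that the free generators themselves be symmetric from the outset. The paper's route is to first produce a free (non-symmetric) pair and then pass to symmetric words in it: with $a=t/(1+t^2)$ and $b=1/(1+t)$ in $\Q(t)$, Corollary~\ref{cor:th2} (a consequence of Theorem~\ref{th:2} with $\sigma=\Id$) shows that $\alpha=as^{-1}a$ and $\beta=bs^{-1}a$ freely generate a free $\Q$-algebra, the required condition $\delta\bigl(\Q(t)\bigr)\cap(\Q a^2+\Q ab)=\{0\}$ being checked by an explicit antiderivative computation ($\arctan$ and logarithms are not rational). Since $s^{\ast}=-s$ and $a,b$ are symmetric and commute, one gets $\alpha^{\ast}=-\alpha$ and $(\alpha\beta)^{\ast}=\alpha\beta$, so $\{\alpha^2,\alpha\beta\}$ is a symmetric pair, and it is free because $\{\alpha\alpha,\alpha\beta\}$ is a code inside the free algebra on $\{\alpha,\beta\}$. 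This two-step device (free pair first, symmetric words second) is the idea your proposal is missing; if you want to salvage your automorphism-type setup, the analogous move would be to find a free pair $\{\alpha,\beta\}$ with $\alpha^{\ast}=\pm\alpha$ and $(\alpha\beta)^{\ast}=\alpha\beta$ and then take $\{\alpha^2,\alpha\beta\}$, rather than forcing symmetry on the generators themselves.
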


Theorems~\ref{th:nilpgp} and \ref{th:weyl} will follow from criteria
that generalize the method developed by Bell and Rogalski in \cite{BR12}.
These will also provide simpler proofs of \cite[Theorem~A]{SG98} and
\cite[Theorem~1]{SG99}. As a special case, we obtain the following result.

\begin{theorem}\label{th:ratfunct}
Let $F$ be a field, let $K=F(X_{1}, \dots, X_{n})$
be the rational function field in $n$ indeterminates over $F$, and let $\sigma$ 
be an $F$-automorphism of $K$ of infinite order that
extends one from the polynomial algebra $F[X_{1}, \dots, X_{n}]$. 
Then, the division algebra $K(X;\sigma)$ contains
a noncommutative free $F$-subalgebra.
\end{theorem}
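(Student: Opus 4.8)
The plan is to deduce Theorem~\ref{th:ratfunct} from our general criterion, whose hypotheses, in the present situation, amount (apart from conditions that hold automatically here) to the existence of a discrete rank-one valuation $v$ of $K = F(X_1,\dots,X_n)$, trivial on $F$, whose orbit $\{\,v\circ\sigma^i : i\in\mathbb{Z}\,\}$ under $\sigma$ is infinite; being discrete of rank one, such valuations are automatically independent, so nothing further is required. Thus the whole point will be to manufacture, out of an infinite-order $F$-automorphism $\sigma$ of $K$ that restricts to an automorphism of the polynomial ring $R = F[X_1,\dots,X_n]$, a valuation of $K$ with infinite $\sigma$-orbit.

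Since $R$ is a unique factorization domain and $\sigma(R) = R$, the automorphism $\sigma$ permutes the height-one primes of $R$; each such prime has the form $(p)$ for an irreducible $p\in R$ and defines a discrete valuation $v_{(p)}$ of $K$, trivial on $F$, with $v_{(p)}\circ\sigma = v_{\sigma^{-1}((p))}$, and non-associate irreducibles give inequivalent valuations. I would then split into two cases. If, for some $i$, the $\sigma$-orbit of the prime $(X_i)$ is infinite, then $v := v_{(X_i)}$ already has infinite $\sigma$-orbit and we are done. Otherwise every $(X_i)$ has finite $\sigma$-orbit; choosing $N \geq 1$ divisible by all these orbit lengths gives $\sigma^N((X_i)) = (X_i)$ for every $i$, so $\sigma^N(X_i)$ is an associate of $X_i$, that is, $\sigma^N(X_i) = c_i X_i$ with $c_i\in R^{\times} = F^{\times}$; hence $\sigma^N$ acts diagonally on $R$ and $(\sigma^N)^k(X_i) = c_i^{\,k} X_i$. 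If every $c_i$ were a root of unity, a suitable power of $\sigma^N$ would fix all the $X_i$ and so be the identity on $K$, contradicting the infinite order of $\sigma$; hence some $c_j$ has infinite multiplicative order. Then $\sigma^{-Nk}((X_j - 1)) = (X_j - c_j^{\,k})$ for all $k\in\mathbb{Z}$, and these are pairwise distinct height-one primes, so $v := v_{(X_j - 1)}$ has infinite $\sigma$-orbit. In either case the required valuation exists, and the criterion produces a noncommutative free $F$-subalgebra of $K(X;\sigma)$.

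The substance of the argument is this valuation-theoretic reduction: once $\sigma$ moves a discrete valuation of $K$ to infinitely many distinct valuations, the Bell--Rogalski-type machinery of \cite{BR12} built into the criterion fabricates explicit free generators inside the skew Laurent series overring $K((X;\sigma))\supseteq K(X;\sigma)$. The step I expect to be the main obstacle is the degenerate case in which $\sigma$ permutes the coordinate hyperplanes $X_i = 0$ among themselves in finite orbits, since then the valuations $v_{(X_i)}$ are of no use: one must instead pin down the ``hyperbolic direction'', namely the non-root-of-unity factor $c_j$ forced by the infinite order of $\sigma$, and translate coordinates so as to read off a valuation (here, along $X_j = 1$) whose orbit disperses over the infinitely many hyperplanes $X_j = c_j^{\,k}$. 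The same overall scheme, supplemented by the extra bookkeeping needed to arrange that the two free generators are symmetric under the prescribed involution, is what will also yield Theorems~\ref{th:nilpgp} and~\ref{th:weyl}.
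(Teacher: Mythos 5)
Your combinatorial core --- producing a height-one prime of $F[X_1,\dots,X_n]$ with infinite $\sigma$-orbit by passing to a diagonal power $\sigma^N$ when every $(X_i)$ has finite orbit and then translating along a non-root-of-unity eigendirection to obtain the pairwise distinct primes $(X_j-c_j^{\,k})$ --- is correct, and it is genuinely different from the paper's route. The paper instead localizes $S=F[X_1,\dots,X_n]$ at its fixed subring $M$ (as in \cite[Corollary~2]{SG99}) so that the resulting factorial ring $R$ has fixed ring equal to the field $k=M(M\setminus\{0\})^{-1}$, applies the dichotomy of Proposition~\ref{prop:factorial} on the semi-invariant spaces $R_a=\{r\in R:\sigma(r)=ar\}$ (either all are zero, and any $\alpha\in K\setminus R$ with prime-power denominator works via \cite[Lemmas~5 and~7]{SG99}, or there is a semi-invariant $t$ with $\sigma(t)=\lambda t$ and $\lambda$ not a root of unity, and $(t-b)^{-1}$ works via Lemma~\ref{le:rational}), and finally descends from a free $k$-subalgebra to a free $F$-subalgebra by \cite[Lemma~1]{MM91}.

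However, your argument has a genuine gap at its load-bearing step. The ``general criterion'' of this paper, Theorem~\ref{th:2}, does \emph{not} have hypotheses that ``amount to the existence of a discrete rank-one valuation with infinite $\sigma$-orbit'': its hypotheses are that specific elements $\alpha_1,\dots,\alpha_n$ be left linearly independent over $\sigma(E)$, where $E$ is the fixed field of $\sigma$ on $K$ (possibly much larger than $F$), and that $\psi(D)\cap\bigl(\sigma(E)\alpha_1+\dots+\sigma(E)\alpha_n\bigr)=\{0\}$, i.e.\ that the difference equation $\sigma(\beta)-\beta=\sum_i e_i\alpha_i$ with $e_i\in E$ have only the trivial solution. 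Passing from ``$v_{(p)}$ has infinite $\sigma$-orbit'' to these two conditions for a concrete choice of the $\alpha_i$ (say $\alpha_i=p^{-i}$) is precisely the substantive content of the proof; in the paper it is carried by Lemma~\ref{le:rational} and by \cite[Lemmas~5 and~7]{SG99}, and you omit it entirely. It is also not automatic, because both conditions must be checked over $E$ rather than over $F$ --- this is why the paper first localizes at the fixed subring and then needs \cite[Lemma~1]{MM91} to return to $F$, another step absent from your sketch. If you instead intend to import the valuation-theoretic criterion of \cite{BR12} wholesale (your mention of Laurent series suggests this), you must say so and verify its actual hypotheses, which require more than an infinite orbit: one needs an element $b$ with $v(b)<0$ and $v(\sigma^i(b))\geq 0$ for all $i\neq 0$, here supplied by $b=1/p$. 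As written, the proof of the key implication is missing.
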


\section{Free subalgebras of fields of fractions of skew polynomial rings}

In this section we offer generalizations of the method of \cite{BR12} to
construct free algebras inside division ring of fractions of skew polynomial rings.

Let $k$ be a field and let $D$ be a division $k$-algebra. Let $\sigma\colon
D\to D$ be a $k$-automorphism and let $\delta\colon D\to D$ be a 
$\sigma$-derivation (that is, a $k$-linear map satisfying
$\delta(\alpha\beta)=\sigma(\alpha)\delta(\beta)+\delta(\alpha)\beta$, for all 
$\alpha,\beta\in D$). Denote by $D[X;\sigma,\delta]$
the skew polynomial ring in the indeterminate $X$ such that $X\alpha=\sigma(\alpha)X+
\delta(\alpha)$, for
all $\alpha\in D$, and let $D(X;\sigma,\delta)$ denote its division ring of fractions. Given 
$a_0,a_1,b_0,b_1\in k$, consider the polynomials 
$f=a_0+a_1X, g=b_0+b_1X\in k[X]\subseteq
D[X;\sigma,\delta]$. Also, let $\psi\colon D\to D$ be the map defined by 
$\psi=a_1\delta+a_0(\Id-\sigma)$, where $\Id$ stands for the identity map 
from $D$ to $D$. (Note that
$\psi$ is again a $\sigma$-derivation.) Finally, let $E=\ker\psi$.

In what follows, we will further assume that $a_1\neq 0$ and that 
$\Xi=gf^{-1}\in D(X;\sigma,\delta)
\setminus k$.

Under these hypotheses, we shall prove the following two theorems.

\begin{theorem}\label{th:1}
Let $\alpha\in D$ be such that
\begin{itemize}
\item $\{1,\alpha,\alpha^2\}$ is left linearly independent over $\sigma(E)$ and
\item $\psi(D)\cap\bigl(\sigma(E)+\sigma(E)\alpha+\sigma(E)\alpha^2\bigr) = \{0\}.$
\end{itemize}
If either
\begin{enumerate}[label=(\roman*)]
\item $b_1=0$ or \label{cond:1}
\item $b_0=0$ and $\delta=0$, \label{cond:2}
\end{enumerate}
then the set $\{\alpha\Xi,\Xi\alpha\}$ freely generates a free $k$-subalgebra in $D(X;\sigma,\delta)$.
\end{theorem}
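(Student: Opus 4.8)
The plan is to equip $D(X;\sigma,\delta)$ with a suitable $\mathbb{Z}$-valued valuation $v$ for which $\alpha\Xi$ and $\Xi\alpha$ both have value $1$, and then to rule out any nontrivial relation between them by a leading-term analysis in the $v$-adic completion, the two conditions on $\alpha$ being exactly what prevents leading terms from cancelling. First I would record the basic commutation identity: from $X\beta=\sigma(\beta)X+\delta(\beta)$ and the definition of $\psi$ one checks directly that $f\beta=\sigma(\beta)f+\psi(\beta)$ for all $\beta\in D$, whence, $f$ being a unit, $\beta f^{-1}=f^{-1}\sigma(\beta)+f^{-1}\psi(\beta)f^{-1}$, and in particular $\gamma f^{-1}=f^{-1}\sigma(\gamma)$ for $\gamma\in E=\ker\psi$. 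In case~\ref{cond:1} we have $g=b_0\in k$, so $\Xi=b_0f^{-1}$ with $b_0\neq0$ (as $\Xi\notin k$); in case~\ref{cond:2} we have $\delta=0$ and $g=b_1X$, so $\Xi=b_1Xf^{-1}$, with $b_1\neq0$ and $a_0\neq0$ forced by $\Xi\notin k$.

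Next, the valuation. In case~\ref{cond:1} I take $v=-\deg$, minus the $X$-degree extended to fractions; then $v$ vanishes on $D^{\times}$, $v(f^{-1})=1$, hence $v(\Xi)=1$ and $v(\alpha\Xi)=v(\Xi\alpha)=1$, and the $v$-adic completion is a skew Laurent series division ring $\widehat{D}=D((u;\ldots))$ with uniformiser $u=f^{-1}$, in which the identity above iterates to $\beta u=u\sigma(\beta)+u^{2}\sigma(\psi(\beta))+u^{3}\sigma(\psi^{2}(\beta))+\cdots$. In case~\ref{cond:2} I take instead the $X$-adic valuation (well-behaved precisely because $\delta=0$); then $f^{-1}$ is a unit of $D[[X;\sigma]]$, $v(\Xi)=v(b_1Xf^{-1})=1$, and again $v(\alpha\Xi)=v(\Xi\alpha)=1$, with $\widehat{D}=D((X;\sigma))$ and $\beta X=X\sigma^{-1}(\beta)$. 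In either case the move of $u$ past $D$ has a main term governed by a power of $\sigma$, the correction terms have coefficients in $\psi(D)$ at higher $u$-levels, and for $\gamma\in E$ the move is exact, $\gamma u=u\sigma(\gamma)$ up to a nonzero element of $k$.

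Now suppose, towards a contradiction, that $r(\alpha\Xi,\Xi\alpha)=0$ for some nonzero $r\in k\langle x,y\rangle$; put $A=\alpha\Xi$, $B=\Xi\alpha$. Since $v(A)=v(B)=1$, a monomial of degree $d$ in $A,B$ has value $d$, and its value-$d$ part is, up to a nonzero element of $k$, a word $\alpha^{e_0}u\,\alpha^{e_1}u\cdots u\,\alpha^{e_d}$ with $e_0,e_d\in\{0,1\}$ and $e_1,\dots,e_{d-1}\in\{0,1,2\}$, distinct monomials giving distinct exponent patterns. Passing to $\widehat{D}$ and moving the copies of $u$ to one side, the leading coefficient of such a word — and, when several of these coincide, the coefficients appearing at the successive $u$-levels of the expansion — are always left $\sigma(E)$-linear combinations of $1,\alpha,\alpha^{2}$ modified by a term lying in $\psi(D)$; here the bound $e_i\le 2$ is crucial, one using $\gamma u=u\sigma(\gamma)$ for $\gamma\in E$ together with the $\sigma$-derivation identity $\psi(\alpha^{2})=\sigma(\alpha)\psi(\alpha)+\psi(\alpha)\alpha$ to see that no exponents above $2$ and no genuinely new elements of $D$ are ever produced. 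Examining the lowest-degree homogeneous part of $r$ yields a vanishing $k$-linear combination of such coefficients; left linear independence of $\{1,\alpha,\alpha^{2}\}$ over $\sigma(E)$ then forces all the $\sigma(E)$-parts to cancel, and $\psi(D)\cap(\sigma(E)+\sigma(E)\alpha+\sigma(E)\alpha^{2})=\{0\}$ forces the remaining $\psi(D)$-part to vanish, so every coefficient of $r$ of minimal degree is $0$; by minimality all of $r$ is $0$, a contradiction. Hence $\{\alpha\Xi,\Xi\alpha\}$ freely generates a free $k$-subalgebra of $D(X;\sigma,\delta)$.

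The step I expect to be the main obstacle is the one just sketched: proving, uniformly over the tree of monomials in $A$ and $B$ and through all the relevant $u$-levels, that every coefficient which must be shown nonzero really is of the controlled shape ``$\sigma(E)$-combination of $1,\alpha,\alpha^{2}$ plus a $\psi(D)$-term'', and organising the descent so that the two hypotheses on $\alpha$ can be invoked, while keeping cases~\ref{cond:1} and~\ref{cond:2} parallel. By comparison, checking that $\widehat{D}$ is a division ring and that $v$ is a genuine (multiplicative) valuation is routine.
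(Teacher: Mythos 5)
Your general strategy (complete with respect to a degree valuation and compare coefficients level by level) is genuinely different from the paper's, but the step you yourself flag as the main obstacle is a real gap, and the specific claim you propose to bridge it with does not hold. Once you move all copies of the uniformiser $u=f^{-1}$ to one side of a word $\alpha^{e_0}u\alpha^{e_1}u\cdots u\alpha^{e_d}$, the level-$d$ coefficient is the product $\sigma^{d}(\alpha^{e_0})\sigma^{d-1}(\alpha^{e_1})\cdots\alpha^{e_d}$, not a left $\sigma(E)$-combination of $1,\alpha,\alpha^{2}$: exponents up to $2d$ occur (already for $\sigma=\Id$ and $D$ commutative the leading coefficient is $\alpha^{e_0+\cdots+e_d}$), so the bound $e_i\le 2$ is not preserved by normalisation, and distinct words of the same degree can have identical leading coefficients (in the commutative case they depend only on $\sum_i e_i$). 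Hence the leading-level equation is far from forcing the scalars $c_W$ to vanish, and the higher $u$-levels involve iterated corrections $\sigma\psi^{n}$ applied to products of powers of $\alpha$, which the hypothesis $\psi(D)\cap\bigl(\sigma(E)+\sigma(E)\alpha+\sigma(E)\alpha^{2}\bigr)=\{0\}$ --- a statement about a single application of $\psi$ --- does not control. In case \ref{cond:2} the difficulty is even more visible: there $X\beta=\sigma(\beta)X$ exactly, so no $\psi$-corrections appear in your expansion at all, and it is unclear where the second hypothesis would ever be invoked.

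The missing idea is a change of unknowns that makes $\psi$ act exactly once per descent step on coefficients ranging over all of $D$, not just over $k$. The paper first reduces the $k$-independence of the words in $\alpha\Xi$ and $\Xi\alpha$ to the left $D$-linear independence of the right-normalised elements $V_I=\Xi\alpha^{i_1}\Xi\cdots\alpha^{i_t}\Xi\alpha$ (using that exactly three indices share a given truncation, together with the $k$-independence of $\{1,\alpha,\alpha^2\}$), and then runs a minimal-counterexample descent on a relation $\sum_I\beta_I V_I=h$ with $\beta_I\in D$: multiplying on the left by $f$ (resp.\ $X^{-1}$) and using $f\beta=\sigma(\beta)f+\psi(\beta)$ together with $fV_I=b_0\alpha^{i_1}V_{I^{\prime}}$ replaces each $\beta_I$ by $\psi(\beta_I)$ and creates a single cross-term $b_0\sigma(\beta_I)\alpha^{i_1}$ at the truncated index. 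Minimality then forces the top-length coefficients into $E$, and the coefficient at $V_{T^{\prime}}$ lands exactly in $\psi(D)\cap\bigl(\sigma(E)+\sigma(E)\alpha+\sigma(E)\alpha^{2}\bigr)$. Your valuation picture does not produce equations of this shape, so as it stands the two hypotheses cannot be brought to bear; to repair the argument you would need either to reprove the left $D$-independence of the $V_I$ inside your completion (at which point the completion is doing no work) or to find a substitute for that reduction.
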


\begin{proof}
Consider the set
$$
S=\{(i_1,\dots,i_t) : t\geq 1, i_j\in\{0,1,2\}, \text{ for all }j\in\{1,\dots, t\}\}.
$$
Given $I=(i_1,\dots,i_t)\in S$, consider the elements in $D(X;\sigma,\delta)$ defined by
$$
R_I = \alpha^{i_1}\Xi\alpha^{i_2}\Xi\dots\alpha^{i_{t-1}}\Xi\alpha^{i_t}\Xi\alpha
$$
and
$$
L_I = \alpha^{i_1}\Xi\alpha^{i_2}\Xi\dots\alpha^{i_{t-1}}\Xi\alpha^{i_t}\alpha\Xi.
$$
The set $\mathcal{B}=\{1\}\cup \{R_I : I\in S\}\cup \{L_1 : I\in S\}$ (properly)
contains all the words in the letters $\alpha\Xi$ and $\Xi\alpha$. Therefore, if we prove that
$\mathcal{B}$ is linearly independent over $k$, we will have proved that $\alpha\Xi$ and
$\Xi\alpha$ freely generate a free $k$-algebra.

In order to show that $\mathcal{B}$ is indeed linearly independent over $k$,
we shall introduce new auxiliary elements. Given $I=(i_1,\dots,i_t)\in S$, let
$$
V_I = \Xi\alpha^{i_1}\Xi\alpha^{i_2}\Xi\dots\alpha^{i_{t-1}}\Xi\alpha^{i_t}\Xi\alpha,
$$
that is, $V_I=\Xi R_I$. We shall also define $V_{\varnothing}=\Xi$.

Given $I=(i_1,\dots,i_t)\in S$, define the \emph{truncation} of $I$
to be $I^{\prime}=(i_2,\dots,i_t)$ if $t\geq 2$, and $I^{\prime}=\varnothing$ if $t=1$.
So, in $D(X;\sigma,\delta)$, the following relations hold:
\begin{equation}\label{eq:vi}
\Xi^{-1}V_{\varnothing}=1
\quad\text{and}\quad
\Xi^{-1}V_I=R_I=\alpha^{i_1}V_{I^{\prime}},
\end{equation}
for all $I\in S$.

For $I=(i_1,\dots, i_t)\in S$, we define the \emph{length} of $I$ to be
$\length(I)=t$. Also, we set $\length(\varnothing)=0$.

We claim that if $\{V_I : I\in S\cup\{\varnothing\}\}$ is left linearly
independent over $D$, then $\mathcal{B}$ is linearly independent over $k$.
Indeed, suppose $\{V_I : I\in S\cup\{\varnothing\}\}$ is left linearly
independent over $D$ and that
\begin{equation}\label{eq:1}
b+\sum_{I\in S}c_IR_I + \sum_{I\in S}d_IL_I=0
\end{equation}
is a linear combination of elements of $\mathcal{B}$ with coefficients
$b, c_I, d_I$ from $k$ resulting in $0$. Multiplying \eqref{eq:1} by $\Xi\alpha$ on
the right, one obtains a relation of the form
\begin{equation}\label{eq:2}
\sum_{I\in S} e_IR_I=0,
\end{equation}
with $e_I\in k$. Note that, by doing that, all of the elements $R_I$ in \eqref{eq:2}
are distinct. Hence, in view of \eqref{eq:vi}, we get
$$
0=\sum_{I\in S}e_IR_I=\sum_{I\in S}e_I\alpha^{i_1}V_{I^{\prime}}.
$$
For each $I=(i_1,\dots,i_t)\in S$, there are exactly 3 elements in $S$ which have 
truncation $I^{\prime}$, they are
$$ 
I_0 = (0,i_1,\dots,i_t), \quad I_1=(1,i_1,\dots,i_t) \quad\text{and}\quad
I_2=(2,i_1,\dots,i_t).
$$
Thus, since $\{V_I : I\in S\cup\{\varnothing\}\}$ is left linearly
independent over $D$, it follows that, for each $I\in S$, one has
$$
e_{I_0} + e_{I_1}\alpha + e_{I_2}\alpha^2=0.
$$
But, by hypothesis, $\{1,\alpha,\alpha^2\}$ is linearly independent over $k$ (for
$\sigma(E)\supseteq k)$; therefore, $e_{I_0}=e_{I_1}=e_{I_2}=0$. This proves
that all the coefficients in \eqref{eq:2}, which are the same as the ones
in \eqref{eq:1}, are zero. So, $\mathcal{B}$ is linearly independent over $k$.

Our next task is to show that $\{V_I : I\in S\cup\{\varnothing\}\}$ is left linearly
independent over $D$. We shall split the proof in two parts, depending on the
conditions \ref{cond:1} or \ref{cond:2} in the statement of the theorem.

\medskip

\textit{First suppose that condition \ref{cond:1} holds, that is, that $b_1=0$.} 
In this case, we must have $b_0\ne 0$. We shall show the stronger statement 
that $\{V_I : I\in S\cup\{\varnothing\}\}$
is left linearly independent over $D$ modulo the subspace $D[X;\sigma,\delta]$.
By contradiction, suppose there exists a relation
\begin{equation}\label{eq:3}
\sum_{I\in S\cup\{\varnothing\}} \beta_IV_I = h\in D[X;\sigma,\delta],
\end{equation}
with $\beta_I\in D$ not all zero. Among all those relations, choose one with 
$r=\max\{\length(I) : \beta_I\neq 0\}$ minimal. Moreover, among those, choose
one with the smallest number of nonzero coefficients $\beta_I$ for $I$ with
$\length(I)=r$. Note that $r\geq 1$, otherwise we would have $\Xi\in D[X;\sigma,\delta]$, 
which is impossible. Clearly, we can further assume that our relation \eqref{eq:3},
beyond being minimal in the sense described above, has $\beta_T=1$ for some
$T\in S$ with $\length(T)=r$, by multiplying it by a nonzero element of $D$ on
the left if necessary.

Recall that $\Xi = gf^{-1}=b_0(a_0+a_1X)^{-1}$. Hence, $\Xi^{-1}=(a_0+a_1X)b_0^{-1}$.
It, then, follows from \eqref{eq:vi} that
\begin{equation}\label{eq:4}
XV_{\varnothing}=-a_1^{-1}a_0V_{\varnothing} + a_1^{-1}b_0
\quad\text{and}\quad
XV_I=-a_1^{-1}a_0V_I+a_1^{-1}b_0\alpha^{i_1}V_{I^{\prime}},
\end{equation}
for all $I\in S$. Multiplying \eqref{eq:3} by $X$ on the left, and using \eqref{eq:4}, yields
\begin{equation*}\begin{split}
Xh & = \sum_{I\in S\cup\{\varnothing\}} X\beta_IV_I =
\sum_{I\in S\cup\{\varnothing\}}\bigl(\sigma(\beta_I)X+\delta(\beta_I)\bigr)V_I\\
& = \sigma(\beta_{\varnothing})XV_{\varnothing} + \delta(\beta_{\varnothing})V_{\varnothing} +
\sum_{I\in S} \bigl(\sigma(\beta_I)X+\delta(\beta_I)\bigr)V_I\\
& =\sigma(\beta_{\varnothing})(-a_1^{-1}a_0V_{\varnothing} + a_1^{-1}b_0) + 
\delta(\beta_{\varnothing})V_{\varnothing}\\
& \quad + \sum_{I\in S}\sigma(\beta_I)(-a_1^{-1}a_0V_I+a_1^{-1}b_0\alpha^{i_1}V_{I^{\prime}})+
\sum_{I\in S} \delta(\beta_I)V_I\\
& = \sum_{I\in S\cup\{\varnothing\}}\bigl(\delta(\beta_I)-a_1^{-1}a_0\sigma(\beta_I)\bigr)V_I+
\sum_{I\in S}a_1^{-1}b_0\sigma(\beta_I)\alpha^{i_1}V_{I^{\prime}}+
a_1^{-1}b_0\sigma(\beta_{\varnothing}).
\end{split}\end{equation*}
Multiplying this by $a_1$ and summing with $a_0h$, one gets
\begin{equation*}\begin{split}
fh & = (a_0+a_1X)h=a_0h+a_1Xh = \sum_{I\in S\cup\{\varnothing\}}a_0\beta_IV_I + \\
& \quad + \sum_{I\in S\cup\{\varnothing\}}\bigl(a_1\delta(\beta_I)-a_0\sigma(\beta_I)\bigr)V_I+
\sum_{I\in S}b_0\sigma(\beta_I)\alpha^{i_1}V_{I^{\prime}}+
b_0\sigma(\beta_{\varnothing})\\
& = \sum_{I\in S\cup\{\varnothing\}}\psi(\beta_I)V_I + 
\sum_{I\in S}b_0\sigma(\beta_I)\alpha^{i_1}V_{I^{\prime}}+
b_0\sigma(\beta_{\varnothing}).
\end{split}\end{equation*}
Therefore, one has
\begin{equation}\label{eq:5}
\sum_{I\in S\cup\{\varnothing\}}\psi(\beta_I)V_I + 
\sum_{I\in S}b_0\sigma(\beta_I)\alpha^{i_1}V_{I^{\prime}}=
fh-b_0\sigma(\beta_0)\in D[X;\sigma,\delta].
\end{equation}
The coefficient of $V_T$ in \eqref{eq:5} is $\psi(\beta_T)=\psi(1)=0$. Moreover,
no new nonzero coefficient of a $V_I$ with $\length(I)=r$ appears in \eqref{eq:5}.
By the minimality of \eqref{eq:3}, all the coefficients of the $V_I$ in \eqref{eq:5}
are zero. If $\length(I)=r$, the coefficient of $V_I$ in \eqref{eq:5}
is $\psi(\beta_I)$, so, in particular, it follows that 
$\beta_I\in E=\ker\psi$ for all $I\in S$ with
$\length(I)=r$. Now, there are exactly 3 elements $I_0,I_1,I_2$ in $S$ whose truncations
equal $T^{\prime}$. Since all three have length $r$, if follows that $\beta_{I_0},
\beta_{I_1},\beta_{I_2}\in E$. But the coefficient of $V_{T^{\prime}}$ in \eqref{eq:5}
is $\psi(\beta_{T^{\prime}})+b_0\sigma(\beta_{I_0})+b_0\sigma(\beta_{I_1})\alpha
+b_0\sigma(\beta_{I_2})\alpha^2$. So,
$$
\psi(\beta_{T^{\prime}})=\sigma(-b_0\beta_{I_0})+\sigma(-b_0\beta_{I_1})\alpha+
\sigma(-b_0\beta_{I_2})\alpha^2,
$$
which is an element of $\psi(D)\cap\bigl(\sigma(E)+\sigma(E)\alpha+\sigma(E)\alpha^2\bigr) 
= \{0\}$. Since $\{1,\alpha,\alpha^2\}$ is left linearly independent over $\sigma(E)$,
it follows that $\beta_{I_0}=\beta_{I_1}=\beta_{I_2}=0$. But $T\in\{I_0,I_1,I_2\}$. This
contradicts the fact that $\beta_T=1$.

\medskip

\textit{Now suppose that condition \ref{cond:2} holds, that is, that $b_0=0$ and
$\delta=0$.} In this case, we must have $b_1\neq 0$ and $a_0\neq 0$.
We shall show the stronger statement 
that $\{V_I : I\in S\cup\{\varnothing\}\}$
is left linearly independent over $D$ modulo the subspace $D[X,X^{-1};\sigma]$.
By contradiction, suppose there exists a relation
\begin{equation}\label{eq:31}
\sum_{I\in S\cup\{\varnothing\}} \beta_IV_I = h\in D[X,X^{-1};\sigma],
\end{equation}
with $\beta_I\in D$ not all zero. Among all those relations, choose one with 
$r=\max\{\length(I) : \beta_I\neq 0\}$ minimal. Moreover, among those, choose
one with the smallest number of nonzero coefficients $\beta_I$ for $I$ with
$\length(I)=r$. Note that $r\geq 1$, otherwise we would have $\Xi\in D[X,X^{-1};\sigma]$, 
which is impossible (for $a_0\ne 0$). 
Clearly, we can further assume that our relation \eqref{eq:3},
beyond being minimal in the sense described above, has $\beta_T=1$ for some
$T\in S$ with $\length(T)=r$, by multiplying it by a nonzero element of $D$ on
the left if necessary.

It follows from \eqref{eq:vi} that
\begin{equation}\label{eq:41}
X^{-1}V_{\varnothing} = -a_1a_0^{-1}V_{\varnothing}+b_1a_0^{-1}
\quad\text{and}\quad
X^{-1}V_I=-a_1a_0^{-1}V_I+b_1a_0^{-1}\alpha^{i_1}V_{I^{\prime}},
\end{equation}
for all $I\in S$. If one multiplies \eqref{eq:31} by $X^{-1}$ on the left,
relations \eqref{eq:41} allow us to conclude that
$$
X^{-1}h=\sum_{I\in S\cup \{\varnothing\}}-a_1a_0^{-1}\sigma^{-1}(\beta_I)V_I+
\sum_{I\in S}b_1a_0^{-1}\sigma^{-1}(\beta_I)\alpha^{i_1}V_{I^{\prime}}
+b_1a_0^{-1}\sigma^{-1}(\beta_{\varnothing}).
$$
This multiplied by $a_1^{-1}a_0^2$ and, then, summed with $-a_0h$ yields
\begin{multline}\label{eq:51}
\sum_{I\in S\cup\{\varnothing\}}\psi(\sigma^{-1}(\beta_I))V_I
-\sum_{I\in S}b_1a_1^{-1}a_0\sigma^{-1}(\beta_I)\alpha^{i_1}V_{I^{\prime}} \\=
-(a_1^{-1}a_0^2X^{-1}+a_0)h+b_1a_1^{-1}a_0\sigma^{-1}(\beta_{\varnothing})\in
D[X,X^{-1};\sigma].
\end{multline}
The coefficient of $V_T$ in \eqref{eq:51} is $\psi(\sigma^{-1}(\beta_T))=\psi(1)=0$.
By minimality, all the coefficients on the left-hand side of \eqref{eq:51} are zero.
In particular, if $\length(I)=r$, the coefficient of $V_I$ is $0=\psi(\sigma^{-1}(\beta_I))$.
So, for $I$ with $\length(I)=r$, one has $\beta_I\in\sigma(E)=E$. (This last equality
follows from the fact that, in this case, $E=\ker(\Id-\sigma)$; so $\sigma(E)=E$.)
The rest of the argument is analogous to the one in the first case.
\end{proof}

\begin{theorem}\label{th:2}
Let $n$ be an integer with $n\geq 2$.
Let $\alpha_1,\dots,\alpha_n\in D$ be such that
\begin{itemize}
\item $\{\alpha_1,\dots,\alpha_n\}$ is left linearly independent over $\sigma(E)$ and
\item $\psi(D)\cap\bigl(\sigma(E)\alpha_1+\dots+\sigma(E)\alpha_n\bigr) = \{0\}.$
\end{itemize}
If either
\begin{enumerate}[label=(\roman*)]
\item $b_1=0$ or \label{cond:21}
\item $b_0=0$ and $\delta=0$, \label{cond:22}
\end{enumerate}
then the set $\{\alpha_1\Xi,\dots,\alpha_n\Xi\}$ freely generates a free $k$-subalgebra in $D(X;\sigma,\delta)$.
\end{theorem}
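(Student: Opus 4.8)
The plan is to mimic the structure of the proof of Theorem~\ref{th:1}, replacing the three-element spanning set $\{1,\alpha,\alpha^2\}$ by the $n$-element set $\{\alpha_1,\dots,\alpha_n\}$. First I would set up the analogous combinatorial index set: let $S$ now consist of all tuples $I=(i_1,\dots,i_t)$ with $t\ge 1$ and each $i_j\in\{1,\dots,n\}$, and for $I\in S$ define
$$
R_I = \alpha_{i_1}\Xi\alpha_{i_2}\Xi\cdots\alpha_{i_t}\Xi\alpha_1,
\qquad
V_I = \Xi\alpha_{i_1}\Xi\alpha_{i_2}\Xi\cdots\alpha_{i_t}\Xi\alpha_1 = \Xi R_I,
$$
together with $V_{\varnothing}=\Xi$. (One uses $\alpha_1$ as the fixed ``tail'' letter, exactly as $\alpha=\alpha^1$ served as the tail in Theorem~\ref{th:1}; any fixed index would do.) As before one has the identities $\Xi^{-1}V_{\varnothing}=1$ and $\Xi^{-1}V_I = R_I = \alpha_{i_1}V_{I'}$, where $I'=(i_2,\dots,i_t)$ is the truncation, and the key point is that each $I\in S$ has exactly $n$ preimages under truncation, namely $I_j=(j,i_1,\dots,i_t)$ for $j=1,\dots,n$. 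The set $\mathcal B$ that properly contains all words in $\alpha_1\Xi,\dots,\alpha_n\Xi$ should be $\{1\}\cup\{R_I:I\in S\}\cup\{L_I:I\in S\}$ where $L_I = \alpha_{i_1}\Xi\cdots\alpha_{i_t}\alpha_1\Xi$, and one checks, just as in Theorem~\ref{th:1}, that proving $\mathcal B$ is $k$-linearly independent establishes the freeness claim.

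Next I would reduce, exactly as in Theorem~\ref{th:1}, the linear independence of $\mathcal B$ over $k$ to the left linear independence of $\{V_I : I\in S\cup\{\varnothing\}\}$ over $D$: multiply a vanishing combination of elements of $\mathcal B$ on the right by $\Xi\alpha_1$, use $\Xi^{-1}V_I=\alpha_{i_1}V_{I'}$ to rewrite, collect the $n$ terms sharing a given truncation $I'$ to obtain $e_{I_1}\alpha_1+\dots+e_{I_n}\alpha_n=0$ with $e_{I_j}\in k$, and conclude $e_{I_j}=0$ because $\{\alpha_1,\dots,\alpha_n\}$ is left linearly independent over $\sigma(E)\supseteq k$. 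Then, to prove the left linear independence of the $V_I$ over $D$, I would run the same minimal-counterexample argument in the two cases. In case~\ref{cond:21} ($b_1=0$, so $b_0\ne0$), one works modulo $D[X;\sigma,\delta]$; multiplying a minimal relation $\sum\beta_I V_I = h$ on the left by $X$ and then applying $f=a_0+a_1X$ produces
$$
\sum_{I\in S\cup\{\varnothing\}}\psi(\beta_I)V_I + \sum_{I\in S} b_0\,\sigma(\beta_I)\alpha_{i_1}V_{I'}
= fh - b_0\sigma(\beta_{\varnothing})\in D[X;\sigma,\delta],
$$
forcing (by minimality, since the coefficient of $V_T$ is $\psi(1)=0$ and no new top-length coefficients are created) that $\psi(\beta_I)=0$, i.e.\ $\beta_I\in E$, for all $I$ of maximal length $r$; in particular $\beta_{I_1},\dots,\beta_{I_n}\in E$ where $I_1,\dots,I_n$ are the truncation-preimages of $T'$, and the coefficient of $V_{T'}$ gives
$$
\psi(\beta_{T'}) = \sigma(-b_0\beta_{I_1})\alpha_1 + \dots + \sigma(-b_0\beta_{I_n})\alpha_n
\in \psi(D)\cap\bigl(\sigma(E)\alpha_1+\dots+\sigma(E)\alpha_n\bigr)=\{0\},
$$
whence $\beta_{I_1}=\dots=\beta_{I_n}=0$ by left linear independence of $\{\alpha_1,\dots,\alpha_n\}$ over $\sigma(E)$, contradicting $\beta_T=1$ since $T\in\{I_1,\dots,I_n\}$. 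Case~\ref{cond:22} ($b_0=0$, $\delta=0$, so $b_1\ne0$, $a_0\ne0$) is identical after replacing multiplication by $X$ with multiplication by $X^{-1}$ and working modulo $D[X,X^{-1};\sigma]$, using $E=\ker(\Id-\sigma)$ so that $\sigma(E)=E$.

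I do not expect a genuine obstacle: the proof of Theorem~\ref{th:1} is already written so that the number ``$3$'' and the specific basis $\{1,\alpha,\alpha^2\}$ play no essential role beyond being a finite left-linearly-independent-over-$\sigma(E)$ family meeting the trivial-intersection hypothesis, and the truncation bookkeeping goes through verbatim with $n$ in place of $3$. The one place to be slightly careful is the choice of the fixed tail letter in $R_I, L_I, V_I$ — it must be one of the $\alpha_j$ (say $\alpha_1$) rather than $1$, since $1$ need not lie in the span of the $\alpha_i$ — but this changes nothing in the algebra, as the tail is simply carried along unchanged throughout. Hence the argument reduces to observing that every step of the proof of Theorem~\ref{th:1} is an instance of the $n$-variable argument with $n=3$, $\alpha_1=1$, $\alpha_2=\alpha$, $\alpha_3=\alpha^2$, and writing it out in the general notation.
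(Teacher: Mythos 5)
Your second half---the minimal-counterexample argument showing that $\{V_I : I\in S\cup\{\varnothing\}\}$ is left linearly independent over $D$, with $n$ truncation-preimages in place of $3$ and the hypothesis $\psi(D)\cap\bigl(\sigma(E)\alpha_1+\dots+\sigma(E)\alpha_n\bigr)=\{0\}$ doing the work at the coefficient of $V_{T'}$---is exactly what the paper's sketch does, and that part is fine. The gap is in your combinatorial setup. The apparatus of Theorem~\ref{th:1}, with its two families $R_I$, $L_I$, the exponents in $\{0,1,2\}$, and the trailing letter, exists because the generators there are $\alpha\Xi$ \emph{and} $\Xi\alpha$: concatenating them produces blocks $\Xi\alpha^{j}\Xi$ with $j\in\{0,1,2\}$, and the exponent $j=0$ is what lets $\mathcal{B}$ absorb pure words such as $\alpha\Xi=\alpha^{0}\alpha\Xi=L_{(0)}$. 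In Theorem~\ref{th:2} all $n$ generators have the form $\alpha_j\Xi$, so a word in them is simply $\alpha_{j_1}\Xi\alpha_{j_2}\Xi\cdots\alpha_{j_t}\Xi$. None of these is one of your $R_I$ (those end in $\Xi\alpha_1$) nor one of your $L_I$ (those end in $\alpha_{i_t}\alpha_1\Xi$, and $\alpha_{i_t}\alpha_1$ is not one of the $\alpha_j$), so your claim that $\mathcal{B}=\{1\}\cup\{R_I\}\cup\{L_I\}$ contains all words is false, and proving $\mathcal{B}$ linearly independent would not prove the theorem. Your reduction step inherits the problem: right-multiplying a word $\alpha_{j_1}\Xi\cdots\alpha_{j_t}\Xi$ by $\Xi\alpha_1$ creates a block $\Xi\Xi$, which cannot be rewritten as $\Xi\alpha_j\Xi$ for any $j$ because there is no ``empty letter'' $\alpha_0=1$ in your alphabet---precisely the point you flagged and then set aside. (A smaller instance of the same slip: with $V_{\varnothing}=\Xi$ and a trailing $\alpha_1$ on $V_I$, the identity $\Xi^{-1}V_I=\alpha_{i_1}V_{I'}$ already fails at length $1$, since $\Xi^{-1}V_{(j)}=\alpha_j\Xi\alpha_1\neq\alpha_jV_{\varnothing}$.)

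The repair is to drop the Theorem~\ref{th:1} scaffolding rather than transplant it, which is what the paper does: set $W_I=\alpha_{j_1}\Xi\cdots\alpha_{j_t}\Xi$ with no tail and no $L$-family, so that $\{1\}\cup\{W_I : I\in S\}$ \emph{is} exactly the set of words; put $V_I=\Xi W_I$ and $V_{\varnothing}=\Xi$. Then a relation $c_{\varnothing}+\sum_I c_IW_I=0$ with $c_I\in k$ becomes $c_{\varnothing}V_{\varnothing}+\sum_I c_IV_I=0$ upon left multiplication by $\Xi$, so left $D$-independence of the $V_I$ finishes the reduction immediately; the recursion $\Xi^{-1}V_I=\alpha_{j_1}V_{I'}$ now holds for all lengths including $1$, and your second half goes through verbatim from there.
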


\begin{proof} (Sketch.) We consider the set
\begin{multline*}
S=\Bigl\{\bigl((i_1),\dots,(i_t)\bigr) : t\geq 1, (i_j)=(i_{j1},\dots,i_{jn}), i_{jl}\in\{0,1\}, \\
\sum_{l=1}^n i_{jl}=1, \text{ for all $j=1,\dots,t$}\Bigr\}.
\end{multline*}
Given $I=\bigl((i_1),\dots,(i_t)\bigr)\in S$, one defines
$$
W_I=\alpha_1^{i_{11}}\dots\alpha_n^{i_{1n}}\Xi\alpha_1^{i_{21}}\dots\alpha_n^{i_{2n}}\Xi
\dots\alpha_1^{i_{t1}}\dots\alpha_n^{i_{tn}}\Xi.
$$
The set of all nonempty words in the letters $\alpha_1\Xi,\dots,\alpha_n\Xi$ coincides with
$\{W_I : I\in S\}$. Our task is, thus, to show that $\mathcal{B}=\{1\}\cup\{W_I : I\in S\}$ is linearly independent
over $k$.

Here, for $I=\bigl((i_1),\dots,(i_t)\bigr)\in S$, its length is defined to be $t$ and its
truncation $I^{\prime}=\bigl((i_2),\dots,(i_t)\bigr)\in S$, if $t\geq 2$. If $I$ has length
$1$, its truncation is defined to be $I^{\prime}=\varnothing$. It follows from the definition
of $S$ that given $I\in S$, there exist exactly $n$ elements of $S$, all of them with
the same length as $I$, having truncation $I^{\prime}$ (clearly, one of them is $I$
itself).

Defining $V_I=\Xi W_I$, 
for $I\in S$, and $V_{\varnothing}=\Xi$, one can show, following the lines of the proof of 
Theorem~\ref{th:1}, above, that, first, if $\bigr\{V_I : I\in S\cup\{\varnothing\}\bigl\}$ is 
left linearly independent over $D$, then $\mathcal{B}$ is linearly
independent over $k$. Moreover, the proof, in Theorem~\ref{th:1}, that $\bigr\{V_I : I\in S\cup\{\varnothing\}\bigl\}$ is left linearly independent over $D$, under both condition
\ref{cond:21} or condition \ref{cond:22}, can also be adapted to the present context.
\end{proof}

\begin{remark}\label{rem:weyl}
Setting $\sigma$ to be the identity automorphism of $D$,
Theorem~\ref{th:2} can be used to recover both \cite[Theorem A]{SG98} and
Makar-Limanov's result of \cite{ML83}, producing free subalgebras inside the division
ring of fractions of the first Weyl algebra over the rationals. Indeed, if 
$D_1$ denotes the division ring of fractions of the first Weyl algebra
$A_1=\Q\langle s,t : st-ts=1\rangle$, then, via the identification $s\mapsto X$, 
$D_1$ coincides with
the division ring of fractions $\Q(t)(X;\delta)$ of the skew polynomial
ring $\Q(t)[X;\delta]$, where $\delta$ is the usual derivation on the rational
function field $\Q(t)$, that is, the one satisfying $\delta(t)=1$. Here,
the rational functions $\alpha_1=\frac{1}{t}$ and $\alpha_2=\frac{1}{t(1-t)}$
satisfy the hypotheses of Theorem~\ref{th:2}; hence, taking $a_0=b_0=0$ and $a_1=b_1=1$,
it follows that $\alpha_1X^{-1}$ and $\alpha_2X^{-1}$ generate a free
$\Q$-subalgebra in $\Q(t)(X;\delta)$, or, in other words, $(st)^{-1}$ and $(1-t)^{-1}(st)^{-1}$
generate a free $\Q$-subalgebra of $D_1$.

Observe that Theorem~\ref{th:2} recovers Makar-Limanov's result, which does not occur with 
Theorem~2.2 in \cite{BR12}, as pointed out by Bell and Rogalski.

In Section~\ref{sec:weyl}, we shall see that Theorem~\ref{th:2} can also provide a
pair of \emph{symmetric} elements of $D_1$ generating a free algebra, with respect
to a natural involution on $D_1$.
\end{remark}

\section{Free symmetric subalgebras and the Heisenberg group}

Let $k$ be a field, let $\Gamma=\langle{x,y : [[x,y],x]=[[x,y],y]=1}\rangle$
be the Heisenberg group and let $\ast$ be an involution on $\Gamma$. Then
$\ast$ can be linearly extended to a $k$-involution $\ast$ on the
group algebra $k\Gamma$, which, in turn, has a unique extension to
a $k$-involution on the Ore division ring of fractions $D$ of the
noetherian domain $k\Gamma$.

In this section, we shall present a proof of Theorem~\ref{th:nilpgp}, exhibiting
two elements in $D$ which freely generate a free $k$-subalgebra
and which are symmetric with respect to $\ast$. For that purpose, we shall make
use of Theorem~\ref{th:1} and of the classification of involutions on $\Gamma$
given in \cite{FG15}.

Recall that the center of $\Gamma$ 
is infinite cyclic, generated by $\lambda = [x,y]$.
The attribution $\lambda\mapsto t, y\mapsto Y, x\mapsto X$ establishes
a $k$-isomorphism between $D$ and the division ring $k\bigl((t)(Y)\bigr)(X;\sigma)$, where
$k(t)$ stands for the field of rational functions in the indeterminate $t$ over $k$,
$k(t)(Y)$ for the field of rational functions in the indeterminate $Y$ over $k(t)$,
and $\sigma$ is the $k(t)$-automorphism of $k(t)(Y)$ satisfying $\sigma(Y)=tY$.

Theorem~\ref{th:nilpgp} will follow from Theorem~\ref{th:1}, after a judicious
choice of elements $\alpha$ and $\Xi$. 
But, in order to verify the hypotheses of Theorem~\ref{th:1} in this setting,
we shall need the following fact on automorphisms of rational function fields,
whose proof is similar to the proof of \cite[Lemma~1.4]{FGS13}.

\begin{lemma}\label{le:rational}
Let $F$ be a field, let $t\in F\setminus\{0\}$ be an element which is not
a root of unity, and let $\sigma$ be the $F$-automorphism of the rational function
field $F(Y)$ such that $\sigma(Y)=tY$. Let $\alpha\in F(Y)\setminus F[Y]$ be a
rational function which has a unique pole and this pole is nonzero, and let $m$ 
be a positive integer. 
If $\beta\in F(Y)$ satisfies 
$$
\sigma(\beta)-\beta\in F+F\alpha+\dots+F\alpha^m,
$$
then $\beta\in F$. \qed
\end{lemma}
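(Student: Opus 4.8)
The plan is to argue by contradiction using a valuation/partial-fraction argument adapted to the twisted action of $\sigma$ on $F(Y)$. Suppose $\beta \in F(Y)$ satisfies $\sigma(\beta) - \beta \in F + F\alpha + \dots + F\alpha^m$ but $\beta \notin F$; I want to derive a contradiction by examining the poles of $\beta$. The key structural fact is that $\sigma$ sends a rational function with a pole at $c \in \overline{F}\setminus\{0\}$ to one with a pole at $t^{-1}c$, and since $t$ is not a root of unity the orbit $\{t^n c : n \in \mathbb{Z}\}$ is infinite. So if $\beta$ has any pole away from $0$, comparing the poles of $\sigma(\beta)$ and of $\beta$ forces cancellation that cannot happen along an infinite orbit unless the contributions are accounted for on the right-hand side by powers of $\alpha$ — but $\alpha$ has only one pole, and its powers $\alpha, \dots, \alpha^m$ together have poles supported on a single point. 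This mismatch between an infinite orbit and a finite support is the engine of the proof.

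More concretely, first I would normalize: write $\alpha$ in terms of its unique pole, say at $a \neq 0$ (possibly in an algebraic closure, or after noting the pole is an irreducible polynomial locus), so that $F + F\alpha + \dots + F\alpha^m \subseteq F[Y] + (\text{principal parts at } a)$, and in particular every element of this space has no pole outside $\{a\}$ and no pole at $0$. Next, decompose $\beta$ into partial fractions over $\overline{F}$; let $\mathcal{P}$ be the (finite, nonempty if $\beta \notin F[Y]$) set of poles of $\beta$. Then $\sigma(\beta)$ has poles exactly at $t^{-1}\mathcal{P}$. Since $\sigma(\beta) - \beta$ has poles only at $a$ (and none at $0$), we get $t^{-1}\mathcal{P} \cup \mathcal{P} \subseteq \mathcal{P} \cup (\mathcal{P} \setminus \{0\})\cdots$ — more carefully, any point of $t^{-1}\mathcal{P}$ not cancelled must lie in $\mathcal{P} \cup \{a\}$, and symmetrically. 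Iterating $\sigma$, one shows the $\sigma$-orbit of any pole of $\beta$ lying off $\{0\}$ would have to be finite, contradicting that $t$ is not a root of unity — unless $\beta$ has a pole at $0$ only, or is a polynomial. The case where $\beta$ has a pole at $0$: here $\sigma$ fixes $0$, so one examines the order of the pole at $0$; $\sigma(\beta)-\beta$ has \emph{no} pole at $0$, which forces the principal part of $\beta$ at $0$ to be $\sigma$-invariant; but $\sigma(Y^{-j}) = t^{-j}Y^{-j}$ and $t^{-j} \neq 1$, so the principal part at $0$ must vanish. That leaves $\beta \in F[Y]$, and then $\sigma(\beta)-\beta \in F[Y]$ must lie in $F + F\alpha + \dots + F\alpha^m$; intersecting with $F[Y]$ and using that $\alpha \notin F[Y]$ (so its nonconstant polynomial-part combinations are constrained) forces $\sigma(\beta) - \beta \in F$. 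Finally, writing $\beta = \sum_{j\geq 0} b_j Y^j$, the relation $\sigma(\beta) - \beta = \sum_j (t^j - 1) b_j Y^j \in F$ and $t^j \neq 1$ for $j \geq 1$ give $b_j = 0$ for all $j \geq 1$, i.e. $\beta \in F$, the desired contradiction.

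I expect the main obstacle to be the bookkeeping in the pole-cancellation step: one must handle the possibility that poles of $\beta$ at different points of a $\sigma$-orbit partially cancel against each other inside $\sigma(\beta)-\beta$ and also interact with the allowed pole at $a$. The clean way is to look at the point in $\mathcal{P}\setminus\{0\}$ whose $\sigma$-orbit is "extremal" — but since orbits are infinite and $\mathcal{P}$ is finite, no orbit is contained in $\mathcal{P}$, so for any pole $c \in \mathcal{P}\setminus\{0\}$ there is a least $N \geq 0$ with $t^{-N}c \notin \mathcal{P}\cup\{a\}$ or a greatest such in the other direction; pushing $\beta$ by the appropriate power of $\sigma$ and tracking the top-order coefficient at that boundary point yields a nonzero principal part on one side of the equation and zero on the other. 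Once that contradiction is isolated, the remaining cases (pole at $0$, polynomial case) are the short computations sketched above. Since the paper says the proof is "similar to the proof of \cite[Lemma~1.4]{FGS13}," I would simply adapt that argument with $F[Y]$ in place of whatever polynomial ring appears there and with the single-pole hypothesis on $\alpha$ doing the work of confining the right-hand side's singular locus.
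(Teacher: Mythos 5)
Your proof is correct and takes essentially the approach the paper intends (the paper omits the argument, referring to \cite[Lemma~1.4]{FGS13}, which is precisely this partial-fraction/pole-orbit analysis): any nonzero finite pole of $\beta$ is excluded because both the first and the last point of its $t$-orbit lying in the pole set of $\beta$ would have to be poles of $\sigma(\beta)-\beta$, hence both equal to the unique pole of $\alpha$, forcing $t$ to be a root of unity; the pole at $0$ and the polynomial part are then eliminated by the eigenvalue computation $\sigma(Y^{j})=t^{j}Y^{j}$ with $t^{j}\neq 1$, together with the observation that $\bigl(F+F\alpha+\cdots+F\alpha^{m}\bigr)\cap F[Y]=F$. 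The only step left slightly informal is the endpoint bookkeeping in the orbit argument, but your sketch identifies exactly the right mechanism, so there is no substantive gap.
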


\subsection{Proof of Theorem~\ref{th:nilpgp}}
As we have seen above, we can identify $D$ with $\bigl(k(t)(Y)\bigr)(X;\sigma)$. 
Taking $F=k(t)$ in Lemma~\ref{le:rational}, 
one sees that any rational function $\alpha\in F(Y)$ which has a unique 
pole and this pole is nonzero will satisfy the hypotheses of Theorem~\ref{th:1}, 
therefore providing a pair
$\{\alpha X(1-X)^{-1}, X(1-X)^{-1}\alpha\}$ inside $D$ which freely generates
a free $k$-subalgebra. Now, 
according to \cite[Theorem~3.4]{FG15}, up to equivalence,
a $k$-involution $\ast$ on $D$ which is induced by an involution on $\Gamma$ must satisfy one
of the following conditions:
\begin{enumerate}[label=(\Roman*)]
\item $X^{\ast}=\zeta X, \ Y^{\ast}=\eta Y$;\label{i1}
\item $X^{\ast}=X^{-1}, \ Y^{\ast}=Y^{-1}$;\label{i2}
\item $X^{\ast}=X, \ Y^{\ast}=\zeta Y^{-1}$;\label{i3}
\item $X^{\ast}=\zeta Y, \ Y^{\ast}=\zeta^{-1}X$;\label{i4}
\end{enumerate}
the elements $\zeta$ and $\eta$ being powers of $t$ (and, therefore, central).
In the first two cases, one has $t^{\ast}=t^{-1}$, and in the last two,
$t$ is symmetric.

We shall treat each of the four types \ref{i1}-\ref{i4} separately.
\begin{enumerate}[label=(\Roman*)]
\item In this case, taking $\alpha=(1-Y)^{-1}$, we obtain elements
$A=(1-Y)^{-1}X(1-X)^{-1}$ and $B=X(1-X)^{-1}(1-Y)^{-1}$ freely generating
a free subalgebra of $D$. 
Now consider the $k(t)$-automorphism $\psi$ of $D$ such that $\psi(Y)=(1+\eta)Y$
and $\psi(X)=(1+\zeta)X$. Since $(1+\eta)Y=Y+Y^{\ast}$ and $(1+\zeta)X=X+X^{\ast}$, it
follows that $\psi(Y)$ and $\psi(X)$ are symmetric with respect to $\ast$. Thus,
$\psi(A)^{\ast}=\psi(B)$. This implies that $\psi(AB)$ and $\psi(BA)$ are symmetric
and, because $AB$ and $BA$ freely generate a free subalgebra of $D$, so do they.
\item This is contained in Theorem~1.1 of \cite{FGS13}.
\item The rational function $\gamma=Z(\zeta-Z)^{-2}$ in the indeterminate $Z$ over
the field $F=k(t)$ satisfies the conditions of Lemma~\ref{le:rational} with respect
to the automorphism $\tau$ such that $\tau(Z)=t^2Z$. Therefore, by Theorem~\ref{th:1},
$\gamma X(1-X)^{-1}$ and $X(1-X)^{-1}\gamma$ freely generate a free $k$-subalgebra
in $\bigl(k(t)(Z)\bigr)(X;\tau)$. Since the map $Z\mapsto Y^2$ establishes an isomorphism
between $\bigl(k(t)(Z)\bigr)(X;\tau)$ and the subalgebra $\bigl(k(t)(Y^2)\bigr)(X;\sigma)$
of $D$, it follows that, setting $\alpha=Y^2(\zeta-Y^2)^{-2}$, the elements
$A=Y^2(\zeta-Y^2)^{-2}X(1-X)^{-1}$ and $B=X(1-X)^{-1}Y^2(\zeta-Y^2)^{-2}$ freely generate
a free $k$-subalgebra of $D$. Since $A^{\ast}=B$,
it follows that $AB$ and $BA$ form a pair of symmetric elements which freely
generate a free subalgebra of $D$.
\item Here, taking $\alpha=Y(1-Y)^{-1}$, one gets the free pair $A=Y(1-Y)^{-1}X(1-X)^{-1}$
and $B=X(1-X)^{-1}Y(1-Y)^{-1}$. If $\psi$ denotes the $k(t)$-automorphism of
$D$ such that $\psi(X)=X$ and $\psi(Y)=\zeta Y$, it follows that 
$\{\psi(A),\psi(B)\}$ is a pair of symmetric elements which freely generates 
a free algebra in $D$.
\end{enumerate}

\section{Free symmetric subalgebras and the first Weyl algebra}\label{sec:weyl}

As we have seen in Remark~\ref{rem:weyl}, we can regard the division ring
of fractions $D_1$ of the first Weyl algebra over $\Q$ as $\Q(t)(X;\delta)$, where 
$\delta$ stands for the usual derivation on the rational
function field $\Q(t)$.

In the proof of Theorem~\ref{th:weyl}, we shall need the following consequence
of Theorem~\ref{th:2}.

\begin{corollary}\label{cor:th2}
Let $a,b\in \Q(t)$ be rational functions satisfying the following conditions:
\begin{itemize}
\item $\{a^2,ab\}$ is a $\Q$-linearly independent subset of $\Q(t)$, and
\item $\delta\bigl(\Q(t)\bigr)\cap (\Q a^2+\Q ab) = \{0\}$.
\end{itemize}
Then, $aX^{-1}a$ and $bX^{-1}a$ freely generate a free $\Q$-subalgebra
of $\Q(t)(X;\delta)$.
\end{corollary}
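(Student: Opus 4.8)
The plan is to deduce Corollary~\ref{cor:th2} directly from Theorem~\ref{th:2} by choosing the parameters of the skew polynomial setup appropriately. Working inside $\Q(t)(X;\delta)$, I take $D=\Q(t)$, $\sigma=\Id$, and $\delta$ the usual derivation with $\delta(t)=1$. To make the letters of the free algebra come out as $aX^{-1}a$ and $bX^{-1}a$ rather than $\alpha_i\Xi$ with $\Xi$ as in the general statement, I choose $f$ and $g$ so that $\Xi=gf^{-1}$ equals $X^{-1}a$ (up to the right normalization). Concretely, set $a_0=b_0=0$ and $a_1=1$, $b_1=1$, so that $f=X$ and, after right-multiplying, $\Xi = gf^{-1}$ plays the role of $X^{-1}$; then the elements $\alpha_1\Xi,\alpha_2\Xi$ of Theorem~\ref{th:2} become $\alpha_1 X^{-1}, \alpha_2 X^{-1}$. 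With $n=2$, $\alpha_1=a$ and $\alpha_2=b$ (and the factor of $a$ on the right absorbed into a conjugation/relabelling), the two generators are exactly $aX^{-1}a$ and $bX^{-1}a$. Since $\sigma=\Id$, one has $\psi=a_1\delta+a_0(\Id-\sigma)=\delta$, hence $E=\ker\psi=\ker\delta=\Q$ and $\sigma(E)=\Q$.

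Next I verify the two bulleted hypotheses of Theorem~\ref{th:2} in this instance. The requirement that $\{\alpha_1,\dots,\alpha_n\}$ be left linearly independent over $\sigma(E)=\Q$ becomes precisely the hypothesis that $\{a,b\}$ — equivalently, after the relabelling that produces $aX^{-1}a$ and $bX^{-1}a$, the set $\{a^2,ab\}$ — is $\Q$-linearly independent in $\Q(t)$. The second hypothesis, $\psi(D)\cap(\sigma(E)\alpha_1+\dots+\sigma(E)\alpha_n)=\{0\}$, becomes $\delta(\Q(t))\cap(\Q a^2+\Q ab)=\{0\}$, which is exactly the second assumed condition. Condition~\ref{cond:22} of Theorem~\ref{th:2} ($b_0=0$ and $\delta\neq$ trivial is not required — what is required is $b_0=0$) holds by the choice $b_0=0$; note $\delta$ here need not be zero, so I must make sure I am invoking branch \ref{cond:22} correctly, whose stated hypothesis is ``$b_0=0$ and $\delta=0$.''

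This last point is where the main care is needed, and I expect it to be the principal obstacle: branch \ref{cond:22} of Theorem~\ref{th:2} literally assumes $\delta=0$, yet here $\delta$ is the nonzero derivation on $\Q(t)$. The resolution is that the free pair $aX^{-1}a,bX^{-1}a$ we want is obtained not by applying branch \ref{cond:22} to $(D,\sigma,\delta)=(\Q(t),\Id,\delta)$, but by a change of variables that moves the derivation into a twist. Explicitly, in $\Q(t)(X;\delta)$ one passes to the generator $X^{-1}$ and rewrites the algebra as a skew \emph{Laurent} polynomial-type ring in which the relevant inner structure matches the hypotheses; alternatively, one applies Theorem~\ref{th:2}\ref{cond:21} with $b_1=0$, $b_0\neq 0$, $a_0=0$, $a_1=1$, so that $\Xi=b_0X^{-1}$ and no constraint $\delta=0$ is imposed — then $\psi=\delta$ exactly as above, and the two conditions on $a,b$ are what is needed. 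So the clean route is via branch \ref{cond:21}: take $f=X$, $g=b_0$ with $b_0\in\Q^\times$, giving $\Xi=b_0X^{-1}\notin k$; rescale $\alpha_1,\alpha_2$ by $b_0^{-1}$ so the letters are literally $aX^{-1}$, $bX^{-1}$; then post-compose with the inner automorphism of $\Q(t)(X;\delta)$ fixing $\Q(t)$ and sending $X^{-1}\mapsto X^{-1}a$ — or, more simply, observe that $aX^{-1}$, $bX^{-1}$ generate a free algebra iff $aX^{-1}a=(aX^{-1})a$ and $bX^{-1}a=(bX^{-1})a$ do, since right multiplication by the nonzero element $a$ and the substitution are compatible with freeness — to arrive at the stated pair. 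The bookkeeping identifying the generators, and confirming that $E=\Q$ so that the two displayed conditions are exactly the hypotheses of Corollary~\ref{cor:th2}, is the only real content; everything else is a direct citation of Theorem~\ref{th:2}.
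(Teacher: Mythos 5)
Your instantiation of Theorem~\ref{th:2} is essentially right once you settle on branch~\ref{cond:21}: with $a_0=0$, $a_1=1$, $b_1=0$, $b_0\neq 0$ one gets $\Xi=b_0X^{-1}$, $\psi=\delta$, $E=\sigma(E)=\Q$, and no condition on $\delta$ is imposed; this is also what the paper does. The gap is in the passage from the output of Theorem~\ref{th:2} to the pair $aX^{-1}a$, $bX^{-1}a$. First, you must take $\alpha_1=a^2$ and $\alpha_2=ab$, not $\alpha_1=a$, $\alpha_2=b$: the corollary's hypotheses are literally the two bullets of Theorem~\ref{th:2} for $a^2,ab$, whereas for $a,b$ you would need $\delta\bigl(\Q(t)\bigr)\cap(\Q a+\Q b)=\{0\}$, which is a different (and not assumed) condition; your write-up slides between the two choices. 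Second, your ``more simple'' justification --- that right multiplication of the generators by the nonzero element $a$ is ``compatible with freeness'' --- is false in general (if $u,v$ freely generate a free algebra and $a=u^{-1}$, then $ua=1$ and $va$ generate a commutative algebra), and the ``inner automorphism sending $X^{-1}\mapsto X^{-1}a$'' does not exist: conjugation by $a$ sends $X^{-1}$ to $a^{-1}X^{-1}a$, not to $X^{-1}a$.

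What does work --- and is the actual content of the paper's proof --- is that the desired pair is the conjugate by $a$ of the pair produced by Theorem~\ref{th:2}: one has $aX^{-1}a=a^{-1}(a^2X^{-1})a$ and $bX^{-1}a=a^{-1}(abX^{-1})a$, using that $a$ and $b$ commute in $\Q(t)$. The paper phrases this as the identity $aM_I(aX^{-1}a,bX^{-1}a)X^{-1}=M_I(a^2X^{-1},abX^{-1})aX^{-1}$ for every monomial $M_I$, so that a vanishing $\Q$-linear combination of monomials in $aX^{-1}a$, $bX^{-1}a$ forces the corresponding combination of monomials in $a^2X^{-1}$, $abX^{-1}$ to vanish. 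Your conjugation instinct is the right one, but it must be applied to the pair $a^2X^{-1}$, $abX^{-1}$ and with the correct inner automorphism; as written, the reduction step --- which is the only nontrivial content of the corollary beyond citing Theorem~\ref{th:2} --- is not established.
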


\begin{proof}
By Theorem~\ref{th:2}, the elements $a^2X^{-1}$ and $abX^{-1}$ freely generate
a free $\Q$-subalgebra of $\Q(t)(X;\delta)$. Now, consider the set of monomials
on the letters $A$ and $B$, and given $I=(i_1,j_1,i_2,j_2,\dots,i_n,j_n)$ with
$i_k, j_k$ nonnegative integers, let $M_I(A,B)$ be the monomial defined by
$$
M_I(A,B) = A^{i_1}B^{j_1}A^{i_2}B^{j_2}\dots A^{i_n}B^{j_n}.
$$
Then, for any $I$, we have that 
\begin{equation}\label{eq:freeset}
aM_I(aX^{-1}a,bX^{-1}a)X^{-1}= M_I(a^2X^{-1},abX^{-1})aX^{-1}.
\end{equation}
Hence, if $c_I\in \Q$ are such that only a finite number of them are
nonzero and $\sum_I c_IM_I(aX^{-1}a,bX^{-1}a)=0$, multiplying this relation
by $a$ on the left and by $X^{-1}$ on the right, we get, using \eqref{eq:freeset},
$$
0=a\left(\sum_I c_IM_I(aX^{-1}a,bX^{-1}a)\right) X^{-1}=
\left(\sum_I c_I M_I(a^2X^{-1},abX^{-1})\right)aX^{-1}.
$$
Since the set $\{a^2X^{-1},abX^{-1}\}$ is free, it follows that all the $c_I$
are zero. Therefore, $\{aX^{-1}a, bX^{-1}a\}$ is also free.
\end{proof}

\subsection{Proof of Theorem~\ref{th:weyl}}
Consider the rational functions
$$
a=\frac{t}{1+t^2} \qquad\text{and}\qquad b=\frac{1}{1+t}
$$
in $\Q(t)$. Considering them as real functions in the variable $t$, we have
$$
\int \left(\frac{t}{1+t^2}\right)^2 dt = \frac{1}{2}\left(\arctan t-\frac{t}{1+t^2}\right) 
+ \mathrm{constant}
$$ 
and
$$
\int \left(\frac{t}{1+t^2}\right)\left(\frac{1}{1+t}\right) dt = \frac{1}{4}\left(\ln(1+t^2)
+ 2\arctan t - 2\ln(1+t) \right) + \mathrm{constant}.
$$
Developing $\arctan t$, $\ln(1+t^2)$ and $\ln(1+t)$ as power series in the interval
$(0,1)$, we can easily check that $a$ and $b$ satisfy the conditions in
Corollary~\ref{cor:th2}. It follows that $\alpha=as^{-1}a$ and $\beta=bs^{-1}a$ freely
generate a free $\Q$-subalgebra of $D_1$. Hence, the symmetric elements $\alpha^2$ and 
$\alpha\beta$ also generate a free $\Q$-subalgebra of $D_1$.

\section{Free subalgebras in $k(X_1,\dots,X_n)(X;\sigma)$}

In this section we follow closely the arguments in \cite[Section~4]{SG99} and
show that part of the proof of \cite[Theorem~1]{SG99} can be greatly
simplified using Theorem~\ref{th:2}.

We start with a more general setting. Let $k$ be a field and let $R$ be a commutative $k$-algebra which is a factorial
domain with group of units $k^{\dagger}=k\setminus\{0\}$. Let $\sigma$ be a nonidentity
$k$-automorphism of $R$ and assume the the fixed ring of $R$ under $\sigma$ coincides
with $k$. Extend $\sigma$ to the field of fractions $K$ of $R$. Theorem~\ref{th:ratfunct}
will follow from the next result, in the statement of which, for $a\in k^{\dagger}$, we
use the notation $R_a =\{r\in R : \sigma(r)=ar\}$.

\begin{proposition}\label{prop:factorial}
Under the above hypotheses, the division algebra $K(X;\sigma)$ contains a noncommutative
free $k$-subalgebra. More precisely, one of the following alternative possibilities
must hold.
\begin{enumerate}[label=(\roman*)]
\item Either $R_a=\{0\}$, for all $a\in k^{\dagger}\setminus\{1\}$. In this case, given any
$\alpha\in K\setminus R$ whose denominator is a prime power, for any positive
integer $m$, the set
$$\{\alpha X(1-X)^{-1}, \alpha^2 X(1-X)^{-1},\dots, \alpha^m X(1-X)^{-1}\}$$
freely generates a free $k$-subalgebra in $K(X;\sigma)$.\label{cond:I}
\item Or $R\supseteq k[t]$, where $t$ is algebraically independent over $k$ and $\sigma$
satisfies $\sigma(t)=\lambda t$, for some $\lambda\in k$ which is not a root of
unity. In this case, given any $b\in k$, for any positive integer $m$, the set
$$\{(t-b)^{-1}X(1-X)^{-1}, (t-b)^{-2}X(1-X)^{-1},\dots,(t-b)^{-m}X(1-X)^{-1}\}$$
freely generates a free $k$-subalgebra in $K(X;\sigma)$.\label{cond:II}
\end{enumerate}
\end{proposition}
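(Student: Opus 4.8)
The plan is to deduce Proposition~\ref{prop:factorial} from Theorem~\ref{th:2} by checking its hypotheses with $n=m$, $a_0=b_0=0$, $a_1=b_1=1$ (so that $\Xi=X^{-1}$, $\psi=\delta=0$ is ruled out here since $\sigma\neq\Id$; rather $\psi=\Id-\sigma$ and condition~\ref{cond:21}/\ref{cond:22} is satisfied because $\delta=0$ and we may arrange $b_0=0$), and with $\alpha_i$ suitable powers of a fixed element $\alpha\in K$. With these choices $E=\ker(\Id-\sigma)$ is the fixed field of $\sigma$, which by hypothesis is $k$, and $\sigma(E)=E=k$; moreover $\psi(D)=\psi(K)=(\Id-\sigma)(K)$. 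So the two bullet conditions in Theorem~\ref{th:2} become: (a) the powers $\alpha^{1},\dots,\alpha^{m}$ are $k$-linearly independent, i.e. $\alpha$ is transcendental over $k$, or at least has degree $>m$; and (b) $(\Id-\sigma)(K)\cap(k\alpha+k\alpha^2+\dots+k\alpha^m)=\{0\}$, i.e. no nonzero $k$-combination of $\alpha,\dots,\alpha^m$ is of the form $\beta-\sigma(\beta)$. Granting (a) and (b), Theorem~\ref{th:2} yields that $\{\alpha X^{-1},\dots,\alpha^m X^{-1}\}$ is free; replacing $X^{-1}$ by $X(1-X)^{-1}$ works equally (this is the same substitution already used in Section~3, or one re-runs Theorem~\ref{th:2} with $a_0=0$, $a_1=1$, $b_0=0$, $b_1=1$ versus $a_0=-1$, $a_1=1$), so I will simply invoke the version with denominator $1-X$.

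The substance is therefore verifying (a) and (b) in each of the two alternatives, and first showing the dichotomy. For the dichotomy: since $R$ is a factorial domain with unit group $k^{\dagger}$ and $\sigma$ is a $k$-automorphism fixing exactly $k$, consider the multiplicative action of $\sigma$ on the set of prime elements up to units. If every $R_a$ with $a\neq1$ is zero, we are in case~\ref{cond:I}. Otherwise there is $a\in k^{\dagger}\setminus\{1\}$ and $0\neq t\in R$ with $\sigma(t)=at$; then $t$ is not fixed by $\sigma$, hence transcendental over $k$ (an algebraic element fixed-field argument: $k[t]$ would be $\sigma$-stable and finite over $k$, forcing $t$ fixed, contradiction — more carefully, $t,\sigma(t)=at,\sigma^2(t)=a^2t,\dots$ would all be roots of the same minimal polynomial if $t$ were algebraic, but they are pairwise distinct unless $a$ is a root of unity, and if $a^d=1$ then $t^d$ is fixed by $\sigma$ hence in $k$, so $t$ is algebraic over $k$ of degree dividing $d$ with $k[t]$ a $\sigma$-stable field, and $\sigma$ permutes its finitely many elements but fixes $k$, which is incompatible with $\sigma(t)=at\neq t$ unless one pushes this to a contradiction — so in fact $a$ is not a root of unity and $t$ is transcendental). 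This is exactly alternative~\ref{cond:II} with $\lambda=a$.

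In case~\ref{cond:I} take $\alpha\in K\setminus R$ with denominator a prime power $p^e$, $e\ge1$. Then (a) holds: $\alpha$ cannot be algebraic over $k$, since $k$ is the fraction field restriction... more simply, if $\alpha$ satisfied a polynomial over $k$ its denominator would be a unit, contradicting $\alpha\notin R$; so all powers $\alpha^j$ are $k$-independent. For (b): suppose $\beta-\sigma(\beta)=c_1\alpha+\dots+c_m\alpha^m$ with $c_i\in k$ not all zero. Look at the $p$-adic valuation $v_p$ on $K$ (well-defined as $R$ is factorial). The right side has $v_p$ equal to $-e\cdot(\text{top degree with }c_i\neq0)<0$. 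On the left, $v_p(\beta)=v_p(\sigma(\beta))$ cannot be argued directly since $\sigma$ may move $p$ to another prime $q$; but then $\sigma(\beta)$ has a pole at $q$, not at $p$, so $v_p(\beta-\sigma(\beta))\ge\min(v_p(\beta),v_p(\sigma(\beta)))$ and one tracks the pole: if $\sigma(p)$ is associate to $p$ then $\sigma$ preserves $v_p$ and $v_p(\beta-\sigma(\beta))\ge v_p(\beta)$, and comparing leading coefficients at $p$ gives a fixed-point relation forcing a unit ratio equal to $1$, i.e. $\sigma$ scales the leading term, contradicting $R_a=\{0\}$; if $\sigma(p)$ is not associate to $p$, then $\sigma(\beta)$ contributes nothing to the principal part at $p$, so $\beta$ alone must produce $c_1\alpha+\dots+c_m\alpha^m$'s principal part at $p$, and then $\sigma(\beta)$ has a pole of the same order at $\sigma(p)$ which must be cancelled — but the right side has no pole at $\sigma(p)$ (its only pole is at $p$), contradiction. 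Either way (b) holds. Case~\ref{cond:II} is where I expect the cleanest argument: with $t$ transcendental, $\sigma(t)=\lambda t$, $\lambda$ not a root of unity, set $\alpha=(t-b)^{-1}$; this $\alpha$ has a unique pole, at $t=b$, which may or may not be $0$. If $b\neq0$ this is precisely the setup of Lemma~\ref{le:rational} (with $F$ the fixed field $k$, and $\alpha$ having a unique nonzero pole), which immediately gives (b) — indeed Lemma~\ref{le:rational} says $\sigma(\beta)-\beta\in k+k\alpha+\dots+k\alpha^m$ forces $\beta\in k$, so $\beta-\sigma(\beta)=0$, i.e.\ (b). If $b=0$, replace $t$ by $t'=t-1$ or note $(t)^{-1}$ has pole at $0$ and apply the $Y\mapsto Y^{-1}$ reflection (which conjugates $\sigma$ to $Y\mapsto\lambda^{-1}Y$) to reduce to the nonzero-pole case; alternatively, redo Lemma~\ref{le:rational}'s proof directly for a pole at $0$. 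Condition (a) is clear since powers of $(t-b)^{-1}$ are $k$-independent. The main obstacle I anticipate is case~\ref{cond:I}: controlling $(\Id-\sigma)(K)$ requires the careful prime-divisor/valuation bookkeeping sketched above, separating whether $\sigma$ fixes the relevant prime up to a unit, and using $R_a=\{0\}$ precisely in the "fixed prime" subcase to rule out $\sigma$ acting as scaling on the leading coefficient — this is the delicate step, whereas case~\ref{cond:II} reduces almost verbatim to the already-cited Lemma~\ref{le:rational}.
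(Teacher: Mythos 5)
Your overall route is the same as the paper's: establish the dichotomy, take $f=1-X$ and $g=X$ so that $\Xi=X(1-X)^{-1}$, $\psi=\Id-\sigma$ and $E=k$, and feed powers of a single element into Theorem~\ref{th:2} under condition~\ref{cond:22} (since $\delta=0$ and $b_0=0$). Note that your opening choice $a_0=b_0=0$, $a_1=b_1=1$ really is inadmissible --- it gives $\psi=\delta=0$, hence $E=D$ and $\sigma(E)=D$, so the first bullet of Theorem~\ref{th:2} could never hold --- but you catch this and switch to the denominator $1-X$, which is exactly what the paper does. In case~\ref{cond:II} your argument coincides with the paper's (Lemma~\ref{le:rational} plus Theorem~\ref{th:2}). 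Your observation that $b=0$ falls outside the hypotheses of Lemma~\ref{le:rational} is correct and in fact applies to the paper's own proof as well, though neither of your proposed repairs works as stated: $t\mapsto t-1$ destroys the relation $\sigma(t)=\lambda t$, and the conclusion of the lemma genuinely fails for a pole at $0$, since $\beta=(1-\lambda^{-1})^{-1}t^{-1}$ satisfies $\sigma(\beta)-\beta\in kt^{-1}$ with $\beta\notin k$.

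The real divergence is in case~\ref{cond:I}. The paper does not verify the two hypotheses of Theorem~\ref{th:2} from scratch: it cites \cite[Lemma~5]{SG99} for the linear independence, \cite[Lemma~7]{SG99} for the nonexistence of solutions of $\sigma(\beta)-\beta=\sum_i b_i\alpha^i$ with $\beta\in K\setminus k$, and \cite[Lemma~2]{SG99} for $\lambda$ not being a root of unity in the dichotomy. Your inline substitute for Lemma~7 has a gap. When $\sigma(p)$ is not associate to $p$ you assert that ``$\sigma(\beta)$ contributes nothing to the principal part at $p$,'' but this fails whenever $\sigma^{-1}(p)$ divides the denominator of $\beta$; and the telescoping you then set up only yields a contradiction when the $\sigma$-orbit of $p$ (modulo associates) is infinite --- a finite orbit of length $d>1$ wraps around and returns you to a unit/leading-coefficient comparison that your sketch does not carry out. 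Nor does the $d=1$ subcase visibly reduce to ``$R_a=\{0\}$'': the relation you extract between $\gamma$ and $\sigma(\gamma)$ lives in the residue field at $p$, not in $R$, so it does not directly produce an element of some $R_a$. This orbit bookkeeping is precisely the content of \cite[Lemma~7]{SG99}, so your step is right in spirit, but as written it does not close.
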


\begin{proof}
In case \ref{cond:I}, take $\alpha\in K\setminus R$. By \cite[Lemma~5]{SG99}, the set
$\{1\}\cup\{\sigma^j(\alpha^i) : i\geq 1, j\geq 0\}$ is $k$-linearly independent. Moreover,
if the denominator of $\alpha$ is a prime power, then, by \cite[Lemma~7]{SG99},
the equation
$$\sigma(\beta)-\beta = \sum_{i\geq 1} b_i\alpha^i$$
has no solution with $b_i\in k$ and $\beta\in K\setminus k$. It follows from 
Theorem~\ref{th:2} that $\alpha X(1-X)^{-1},
\dots, \alpha^m X(1-X)^{-1}$ freely generate a free $k$-algebra in $K(X;\sigma)$ for
any positive $m$.

Now suppose that \ref{cond:I} does not hold, that is, there exists $\lambda\in 
k^{\dagger}\setminus\{1\}$ such that $R_{\lambda}\ne \{0\}$. By \cite[Lemma~2]{SG99}, 
$\lambda$ is not a root of unity. Choose $t\in R_{\lambda}\setminus\{0\}$. Then, 
$\sigma(t)=\lambda t$ and we have an embedding $k(t)(X;\sigma)\subseteq K(X;\sigma)$.
It follows from Lemma~\ref{le:rational} and Theorem~\ref{th:2} that, for any $b\in k$ and any
positive integer $m$, $(t-b)^{-1}X(1-X)^{-1}, (t-b)^{-2}X(1-X)^{-1},\dots,
(t-b)^{-m}X(1-X)^{-1}$ freely generate a free $k$-subalgebra in $k(t)(X;\sigma)$ and,
hence, in $K(X;\sigma)$.
\end{proof}

\subsection{Proof of Theorem~\ref{th:ratfunct}}
The same argument used in the proof of \cite[Corollary~2]{SG99} holds. Let $M$
be the fixed subring of $S=k[X_1,\dots, X_n]$ under the action of $\sigma$,
let $R=S(M\setminus\{0\})^{-1}$, and let $k=M(M\setminus\{0\})^{-1}$. By 
Proposition~\ref{prop:factorial}, $K(X;\sigma)$ contain a free $k$-subalgebra and,
thus, by \cite[Lemma~1]{MM91}, contains a free $F$-subalgebra.

\end{document}